\newtheorem{thm}{Theorem}
\newtheorem{lemma}[thm]{Lemma}
\newtheorem{propo}[thm]{Proposition}
\newtheorem{coro}[thm]{Corollary}
\theoremstyle{definition}
\newcommand{\C}{\mathbb{C}} \newcommand{\R}{\mathbb{R}}  
\def\G{\mathcal{G}}  \def\L{\mathcal{L}} 
\def\tr{\rm{tr}}     \def\Det{\rm{Det}}
 \def\Tr{{\rm Tr}}
\def\rank{{\rm rank}} 
\def\det{{\rm det}}
\title{Cauchy-Binet for Pseudo-Determinants}
\author{Oliver Knill}
\date{June 17, 2014}
\address{
        Department of Mathematics \\
        Harvard University \\
        Cambridge, MA, 02138
        }
\subjclass{Primary: 15A15, 15A69, 15A09 }
\keywords{Linear algebra, Cauchy-Binet, Binet-Cauchy, pseudo-determinant, pseudo-inverse, 
matrix tree theorem, multilinear algebra}
\begin{document}
\maketitle

\begin{abstract}
The pseudo-determinant $\Det(A)$ of a square matrix $A$ is defined 
as the product of the nonzero eigenvalues of $A$. It is a basis-independent number which 
is up to a sign the first nonzero entry of the characteristic polynomial of $A$. 
We prove $\Det(F^T G) = \sum_P \det(F_P) \det(G_P)$ for any two $n \times m$ matrices $F,G$. 
The sum to the right runs over all $k \times k$ minors of $A$, where $k$ is determined by $F$ and $G$.
If $F=G$ is the incidence matrix of a graph this directly implies the Kirchhoff tree theorem
as $L=F^T G$ is then the Laplacian and $\det^2(F_P) \in \{0,1\}$ is equal to $1$ if $P$ is a 
rooted spanning tree.
A consequence is the following Pythagorean theorem: for any self-adjoint matrix $A$ of rank $k$, 
one has $\Det^2(A) = \sum_P \det^2(A_P)$, where $\det(A_P)$ runs over $k \times k$ minors
of $A$. More generally, we prove the polynomial identity 
$\det(1+x F^T G) = \sum_P x^{|P|} \det(F_P) \det(G_P)$
for classical determinants $\det$, which holds for any two $n \times m$ matrices $F,G$ and where
the sum on the right is taken over all minors $P$, understanding the sum to be $1$ if $|P|=0$.
It implies the Pythagorean identity  
$\det(1+F^T F) = \sum_P \det^2(F_P)$ which holds for any $n \times m$ matrix $F$ and sums again over
all minors $F_P$. If applied to 
the incidence matrix $F$ of a finite simple graph, it produces the Chebotarev-Shamis
forest theorem telling that $\det(1+L)$ is the number of rooted spanning forests in the 
graph with Laplacian $L$. 
\end{abstract}

\section{Introduction}

The {\bf Cauchy-Binet theorem} for two $n \times m$ matrices $F,G$ with $n \geq m$ tells that
\begin{equation}
   \det(F^T G) = \sum_{P} \det(F_P) \det(G_P) \; ,
   \label{classicalcauchybinet}
\end{equation}
where the sum is over all $m \times m$ square sub matrices $P$ and $F_P$
is the matrix $F$ masked by the pattern $P$. In other words, $F_P$ is an $m \times m$ matrix obtained 
by deleting $n-m$ rows in $F$ and $\det(F_P)$ is a minor of $F$. 
In the special case $m=n$, the formula is the product formula $\det(F^T G) = \det(F^T) \det(G)$ 
for determinants. Direct proofs can be found in \cite{MarcusMinc,LancasterTismenetsky,Morrow}. 
An elegant multi-linear proof is given in \cite{Shafarevich}, where it is called ``almost tautological".
A graph theoretical proof using the Lindstr\"om-Gessel-Viennot lemma 
sees matrix multiplication as a concatenation process of directed graphs and determinants 
as a sum of weighted path integrals \cite{AigZie}.
The classical Cauchy-Binet theorem implies the Pythagorean identity 
$\det(F^T F) = \sum_P \det^2(F_P)$ for square matrices
which is also called Lagrange Identity \cite{HuppertWillems},
where $P$ runs over all $m \times m$ sub-matrices of $F$. This formula is used in multivariable calculus
in the form $|\vec{v}|^2 |\vec{w}|^2 - (\vec{v} \cdot \vec{w})^2 = |\vec{v} \wedge \vec{w}|^2$
and is useful to count the number of basis choices in matroids \cite{Aigner1979}. 
The Cauchy-Binet formula assures that the determinant is compatible with the matrix product. 
Historically, after Leibniz introduced determinants in 1693, and Van der Monde made it into a theory
in 1776 \cite{Kline2}, Binet and Cauchy independently found
the product formula for the determinant around 1812 \cite{Binet,Cauchy,Pascal,BourbakiHistory,Schneider},
even before matrix multiplication had been formalized. \cite{Coolidge,Kline2} noticed that Lagrange 
had mentioned a similar result before in the three dimensional
case. The term ``matrix" was used by Sylvester first in 1850 \cite{Sylvester,Sylvester1850a,Sylvester1850b}. 
The book \cite{Kline2} mentions that 
Binet's proof was not complete. It was Cayley who looked first at the matrix algebra 
\cite{Cayley1843,Knobloch,Muir}. 
It is evident today that the Cauchy-Binet formula played a pivotal role for the development 
of matrix algebra. Early textbooks like \cite{Reiss} illustrate how much the notation of 
determinants has changed over time. \\

In this paper, we extend the Cauchy-Binet formula~(\ref{classicalcauchybinet})
to matrices with determinant $0$. Theorem~\ref{newcauchybinet}) will follow from Theorem~\ref{Theorem2}) 
and imply new formulas for classical determinants like Theorem~\ref{determinanttheorem}). 
The pseudo-determinant $\Det(A)$ for a square matrix $A$ is defined
as the product of the nonzero eigenvalues of $A$, with the assumption $\Det(0)=1$
for a matrix with all zero eigenvalues like $A=0$ or nilpotent matrices. 
The later assumption renders all formulas also true for zero
matrices.  Looking at singular matrices with pseudo-determinants
opens a new world, which formula (\ref{classicalcauchybinet}) buries under the 
trivial identity ``$0=0$''.
The extension of Cauchy-Binet to pseudo-determinants is fascinating because these determinants 
are not much explored and because Cauchy-Binet for pseudo-determinants is not a trivial 
extension of the classical theorem. One reason is that the most commonly used form of Cauchy-Binet 
is false, even for diagonal square matrices $A,B$: 
while $\Det(A B) = \Det(B A)$ is always true, we have in general:
$$  \Det(A B) \neq  \Det(A) \Det(B) \;   $$
as the example $A=\left[ \begin{array}{cc} 2 & 0 \\ 0 & 0 \\ \end{array} \right]$,
   $B=\left[ \begin{array}{cc} 0 & 0 \\ 0 & 2 \\ \end{array} \right]$ shows.  \\

What can be generalized? Because eigenvalues of square matrices $C$ and $C^T$
agree, it is true that $\Det(C)=\Det(C^T)$ for square matrices. In particular, 
$\Det(F^T G) = \Det(G^T F)$ if $F,G$ are $n \times m$ matrices.
It is also true - even so it is slightly less obvious - that 
$\Det(F^T G) = \Det(F G^T)$. The later follows from the fact that $F^T G$ and $F G^T$ are
essentially isospectral, meaning that they have the same nonzero eigenvalues. 
If $F,G$ are not square, then one of the products has zero eigenvalues
so that we need the pseudo-determinant for this identity to be interesting. Experiments
indicated us that summing over determinants of square matrices on the right hand side works. 
We will show:

\begin{thm}
If $F,G$ are $n \times m$ matrices, then
\begin{equation}
   \Det(F^T G) = \sum_{P} \det(F_{P}) \det(G_{P}) \; ,
   \label{newcauchybinet}
\end{equation}
where the sum is over all $k \times k$ sub matrix masks $P$ of $F$,
and where $k$ is such that $p_{k} (-x)^{m-k}$ is the smallest order
entry in the characteristic polynomial 
$p_0 (-x)^m +p_1 (-x)^{m-1} + \cdots + p_k (-x)^{m-k}$ of $F^T G$.
\label{Theorem1}
\end{thm}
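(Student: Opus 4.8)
The plan is to deduce the identity from the classical Cauchy--Binet theorem~(\ref{classicalcauchybinet}) by reading off a single coefficient of the characteristic polynomial of $A=F^TG$. Write the characteristic polynomial of the $m\times m$ matrix $A$ as
\[
  \det(A-xI)=\sum_{j=0}^{m} E_j(A)\,(-x)^{m-j},
\]
where $E_j(A)$ is the sum of all $j\times j$ principal minors of $A$, so $E_0(A)=1$ and $E_j(A)=p_j$ in the notation of the theorem. First I would record the purely linear-algebra fact that $\Det(A)=E_k(A)$ for the $k$ singled out in the statement: if $m_0$ is the algebraic multiplicity of the eigenvalue $0$, then factoring $\det(A-xI)=(-x)^{m_0}q(x)$ with $q(0)=\prod_{\lambda_i\neq 0}\lambda_i\neq 0$ shows that $E_j(A)=0$ for $j>k:=m-m_0$ while $E_k(A)=q(0)=\Det(A)$; thus $E_k(A)(-x)^{m-k}$ is indeed the lowest-order term of the characteristic polynomial, with $k$ the number of nonzero eigenvalues. (When $k=0$ this reads $\Det(A)=E_0(A)=1$, matching the convention $\Det(0)=1$ and the reading of an empty product as $1$.) After this reduction the theorem is equivalent to the principal-minor identity $E_k(F^TG)=\sum_P\det(F_P)\det(G_P)$ with $P$ ranging over all $k\times k$ masks.

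The key observation for that identity is that principal submatrices of a product split as products again. For a $k$-element column set $S\subseteq\{1,\dots,m\}$ one has $(F^TG)[S,S]=\hat F^{\,T}\hat G$, where $\hat F=F[\,\cdot\,,S]$ and $\hat G=G[\,\cdot\,,S]$ are the $n\times k$ matrices formed by the columns indexed by $S$; this is immediate from $\bigl((F^TG)[S,S]\bigr)_{ss'}=\sum_{l=1}^{n}F_{ls}G_{ls'}$. I would then note that $n\geq k$, since $k=m-m_0\leq\rank(A)\leq\rank(F)\leq n$ (the algebraic multiplicity $m_0$ of $0$ is at least its geometric multiplicity $m-\rank(A)$), so the classical Cauchy--Binet theorem applies to the $n\times k$ matrices $\hat F,\hat G$ and gives
\[
  \det\bigl((F^TG)[S,S]\bigr)=\sum_{|R|=k}\det(F[R,S])\,\det(G[R,S]),
\]
the sum over $k$-element row sets $R\subseteq\{1,\dots,n\}$. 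Summing over all $k$-element $S$ produces $E_k(F^TG)$ on the left and, on the right, the double sum over pairs $(R,S)$, which are exactly the $k\times k$ submatrix masks $P$ with $F_P=F[R,S]$ and $G_P=G[R,S]$. Combined with $\Det(F^TG)=E_k(F^TG)$ this is the assertion.

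The only genuinely delicate step is the first one: pinning down the relation between the pseudo-determinant --- a product of nonzero eigenvalues, where $0$ may come with nontrivial Jordan blocks --- and the characteristic-polynomial coefficient $E_k$, and checking that the two boundary conventions ($\Det(0)=1$ and the empty sum equal to $1$) are consistent; the factorization $\det(A-xI)=(-x)^{m_0}q(x)$ handles this cleanly. Everything downstream is bookkeeping with index sets, and the one substantive input is the classical Cauchy--Binet identity applied to the column-restricted matrices $\hat F,\hat G$. An alternative route would be to establish the stronger polynomial identity $\det(1+xF^TG)=\sum_P x^{|P|}\det(F_P)\det(G_P)$ of Theorem~\ref{determinanttheorem} first and then extract the coefficient of $x^k$, but the direct argument above already suffices.
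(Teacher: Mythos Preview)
Your argument is correct. The reduction $\Det(A)=p_k=E_k(A)$ is exactly Proposition~\ref{Proposition}(8), the factorization $(F^TG)[S,S]=\hat F^{\,T}\hat G$ is immediate, the rank bound $k\le\rank(F^TG)\le n$ is handled properly (and in any case classical Cauchy--Binet degenerates to $0=0$ when $n<k$), and the double sum over $(R,S)$ is precisely the sum over masks $P$.

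The route differs from the paper's. The paper does not invoke the classical Cauchy--Binet theorem as a black box; instead it proves the stronger Theorem~\ref{Theorem2} (valid for every $k$) via exterior algebra: the identity $p_k=\tr(\Lambda^k(F^TG))$, the functoriality $\Lambda^k(F^TG)=\Lambda^kF^T\cdot\Lambda^kG$ (Lemma~\ref{multiplication}), and the identification of the entries of $\Lambda^kF$ with minors (Lemma~\ref{multilinearlemma}). Your approach trades this machinery for a direct reduction to the classical result, applied once to each column-restricted pair $(\hat F,\hat G)$. The two are close cousins---the Multiplication Lemma is itself Cauchy--Binet rephrased for compound matrices---but your packaging is more elementary and makes the dependence on the classical theorem explicit, while the paper's is self-contained and delivers the full polynomial identity $\det(1+zF^TG)=\sum_P z^{|P|}\det(F_P)\det(G_P)$ in the same breath. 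Note, incidentally, that your argument nowhere uses the specific value of $k$: it in fact proves $E_k(F^TG)=\sum_{|P|=k}\det(F_P)\det(G_P)$ for all $k$, i.e.\ Theorem~\ref{Theorem2} itself.
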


The formula holds also in limiting cases like for nilpotent $F^T G$, 
in which case $k=0$ and both sides are $1$. \\

This is more general than the classical Cauchy-Binet theorem. If $F^T G$ is not invertible
but $F G^T$ is, then one can use that $\Det(F^T G) = \det(F G^T)$ and use the classical
Cauchy-Binet result. But Theorem~\ref{Theorem1}) also applies if both $F^T G$ and $F G^T$ 
are singular. For a self-adjoint square matrix $A$ of rank $k$ in particular, 
the formula shows that $\Det(A^2)$ is a sum
of squares of determinants $\det^2(B)$  of $k \times k$ sub matrices $B$.
This uses that $\Det^2(A) = \Det(A^2)$, which is one of the
identities for normal matrices to be discussed in the next section.\\

The pairing $\langle F,G \rangle = \Det(F^T G) = \langle G,F \rangle$ leads to a functional
$||F|| = \sqrt{ \langle F,F \rangle }$ on $n \times m$ matrices. It is far from a norm, as
$||0||=1$, and no triangle inequality nor Cauchy-Schwarz 
inequality $\langle F,G \rangle \leq ||F|| ||G||$ holds.
The sum of determinants $\langle F,G \rangle$ depends in a nonlinear way on both $F$ and $G$:
if $\lambda \neq 0$, then 
$$ \langle \lambda F,G \rangle = \lambda^k \langle F,G \rangle  
                               = \langle F, \lambda G \rangle \; , $$
where $k$ is the number of nonzero eigenvalues of $F^T G$ and where we again understand 
that for $k=0$, all terms are $1$. \\

The ``sphere" $X=||A||=1$ in $M(n,\R)$ is a union of Cauchy-Binet varieties 
$X_n={\rm SL}(n,R),X_{n-1}, \dots  X_1,X_0$. In the case $M(2,\R)$ for example, we have 
$X=X_2 \cup X_1 \cup X_0 = {\rm SL}(2,\R) \cup \{ ad-bc=0,a+d=1 \; \} \cup N$, where $X_0=N$ is the
set of nilpotent matrices. The set $X_1= \{ a(1-b)-b c=1 \; \}$ is a two-dimensional quadric. 
In the case of diagonal $2 \times 2$ matrices, we have 
$X=X_2 \cup X_1 \cup X_0  = \{ |a d|=1 \; \} \cup \{ a d=0, |a+d|=1 \; \} \cup \{a=d=0\}$ 
which is a union of a hyperbola, four points $\{(\pm 1,0) \;\},\{(0, \pm 1) \;\}$ and the origin $\{(0,0)\;\}$.
In the case $M(3,R)$ already, the unit sphere $X$ is the $8$-dimensional 
$X_3 = {\rm SL}(3,R)$ together with a $7$-dimensional $X_2$ and a $6$-dimensional variety $X_1$
as well as the variety of nilpotent matrices. 
The classical Cauchy-Binet theorem misses the three later ones. 
We see that the case $\det(A)=0$ is an unexpectedly rich place. \\

One of the main motivations for pseudo-determinants is graph theory, where
the Laplacian matrix $L$ always has a kernel. While $\det(L)$ is zero and therefore
not interesting, $\Det(L)$ has combinatorial meaning as it allows to count
spanning trees in the graph. The number $\Det(L)$ indeed is a measure for the 
complexity of the graph. This paper grew while looking at questions in graph theory.
The classical Kirchhoff matrix tree theorem is 
an immediate consequence of Theorem~\ref{Theorem1}) because if $F=G$ 
is the incidence matrix of a graph then $A=F^T G$ is the scalar Laplacian and 
$\Det(A) = \Det(F^T G) = \sum_P \det(F_P)^2$. 
The Kirchhoff matrix tree theorem \cite{Biggs} can rely on the classical
Cauchy-Binet theorem for invertible matrices. The reason is that for a connected graph, the kernel
of the Laplacian is one-dimensional, so that $\Det(A)= n \cdot \det(M)$, 
where $M$ is a minor of $A$ which is a classical determinant.
The proof can then proceed with the classical Cauchy-Binet theorem for $M$. 
We are forced to pseudo determinants when looking at geometric interpretations of
trees in the simplex graph $\G$, which is the graph consisting of all complete subgraphs of
a graph where two simplices of dimension difference $1$ are connected if one is contained
in the other. The operator $L=D^2$ on discrete differential forms \cite{knillmckeansinger} will 
then play an important role as the matrix $|D|$ is the adjacency matrix of $\G$. 
The matrix $L$ has a large kernel in general with 
${\rm dim}({\rm ker}(L)) = \sum_i b_i=b$, the sum of the Betti numbers of the graph which by
the Hodge theorem is the total dimension of all harmonic forms. Theorem~\ref{Theorem1}) gives
a combinatorial interpretation of what $\Det(L)$ means, if $L$ is the form Laplacian
of a graph. The matrix $L$ is large in general: if $G=(V,E)$ has $v$ complete subgraphs, 
then $\G$ has $v$ vertices and $L$ is a $v \times v$ matrix. 
For the complete graph $K_n$ for example, $v=2^n-1$. We will see below that also the 
classical Kirchhoff matrix theorem can be understood more naturally using Cauchy-Binet for
pseudo determinants. \\

{\bf Acknowledgements}:
Thanks to {\bf Fuzhen Zhang} for a pointer on generalized matrix functions,
to {\bf Nieves Castro Gonz\'alez} for references like \cite{Bapat,MiaoBenIsrael} and 
{\bf Woong Kook} for information about more sophisticated matrix tree theorems. 
Thanks to {\bf Shlomo Sternberg} for a historical discussion on Binet in 2007 which sparked my interest
in this part of the history of linear algebra. Added in proof is a reference to \cite{Knillforest},
where it pointed out that the results of the current paper leads to a proof of the 
Chebotarev-Shamis forest theorem \cite{ChebotarevShamis2,ChebotarevShamis1}. 
Some linear algebra in this context has been summarized
in the talk \cite{KnillILAS}.

\section{The pseudo-determinant} 

The {\bf pseudo-determinant} of a square $n \times n$ matrix $A$ is 
defined as the product of the nonzero eigenvalues of $A$ with the convention
that the pseudo-determinant of a nilpotent matrix is $1$. The choice for the 
later makes all formulas hold true unconditionally. As the sum over the empty set is
naturally zero, the product over the empty set is naturally assumed to be $1$. 
With this definition, the pseudo determinant is then the smallest 
coefficient $p_k$ of the characteristic polynomial $p_A(x) = \det(A-x) = p_0 (-x)^n
+p_1 (-x)^{n-1} + \cdots + p_k (-x)^{n-k} + \cdots + p_n$. 
This is $1$ for nilpotent $A$, as $\det(A-x)=(-x)^n$ for such matrices. An other extreme
is $\Det(A)=p_n=\det(A)$ if the matrix $A$ is invertible. \\

We start with some basic facts about the pseudo-determinant $\Det(A)$ 
of a $n \times n$ matrix $A$. Most points are obvious, but we did not find any references. 
Some multi-linear treatment of the pseudo-determinant will appear in the 
proof section of Theorem~\ref{Theorem2}). \\

We denote with $A^*$ the adjoint of $A$ and with 
$A^+$ the Moore-Penrose pseudo-inverse of $A$ defined by 
$A^+=V D^+ U^*$ if $A=UDV^*$ is the singular value decomposition (SVD) of $A$
and $D^+$ is the diagonal matrix which has the same zero entries than $D$
and where $D^{+}_{ii}=1/D_{ii}$ for the nonzero diagonal entries of $D$. Pseudo-inverses
appear textbooks like \cite{Strang2009} in the context of SVD. While SVD
is not unique, the Moore-Penrose pseudo-inverse is. 
Denote by $p_A(x) = \det(A-x)$ the characteristic polynomial of $A$.
We use a sign choice used from \cite{Strang2009,Bretscher} and computer algebra
systems like Mathematica. A matrix is self-adjoint if $A=A^*$, it is normal if 
$A A^* = A^* A$. We also denote by $Q$ the unit cube in $\R^n$ and by $|Y|_k$ the $k$-volume
of a $k$-dimensional parallelepiped $Y$ in $\R^n$. Let $\Lambda^k A$ denote the $k$'th
exterior power of $A$. It is a $\left( \begin{array}{c} n \\ k \end{array} \right)
\times \left( \begin{array}{c} n \\ k \end{array} \right)$ matrix which is determined by 
$$ \Lambda^k A (e_1 \wedge \cdots \wedge e_k) = (Ae_1 \wedge \cdots \wedge Ae_k)  \; . $$
If a basis $\{ e_i \}$ in $\R^n$ is given, then  $e_I = e_{i_1} \wedge \cdots \wedge e_{i_k}$
with $I \subset \{1, \dots ,n\}$ of cardinality $k$ defines a basis of $\Lambda^k \R^n$. 
The matrix entries of $[\Lambda^k A]_{IJ}$ are $(\Lambda^k A e_I) \cdot e_J$. 
The Pfaffian of a real skew-symmetric $2n \times 2n$ matrix $A$ is defined as follows:
since $A$ is naturally a $2$-form in $\R^{2n}$, one can look at its $n$'th exterior power
$A \wedge \cdots \wedge A$ which is a $2n$ form ${\rm pf}(A) e_1 \wedge \cdots \wedge e_{2n}$ and
consequently defines the real number ${\rm pf}(A)$. 
Since the Pfaffian satisfies ${\rm pf}^2(A)=\det(A)$, we can define the pseudo Pfaffian ${\rm Pf}(A)$
of a real skew-symmetric matrix $A$ as the product of the nonzero entries $\lambda_i$, if $A$ 
is orthogonally conjugated to a block-diagonal matrix containing diagonal blocks 
$B_j=\left[ \begin{array}{cc} 0 & \lambda_j \\ -\lambda_j & 0 \end{array} \right]$. The block 
diagonalization follows from the spectral theorem for real skew-symmetric matrices, which 
in turn follows from the fact that $iA$ is a complex
selfadjoint matrix if $A$ is real skew symmetric and that 
$S=\left[ \begin{array}{cc} 1 & -1 \\ i & i \end{array} \right]$ conjugates the skew diagonal
blocks $B_j$ to  $S^{-1} B_j S 
= \left[ \begin{array}{cc} i \lambda_j & 0 \\ 0 & -i \lambda_j \end{array} \right]$. 
We understand again that the pseudo Pfaffian satisfies ${\rm Pf}(A)=1$ if $A=0$. 
While ${\rm pf}(A)=0$ for all $(2n+1) \times (2n+1)$ matrices, the pseudo Pfaffian ${\rm Pf}(A)$ is never zero. 

\begin{propo}
Assume $A,B \in M(n,\C)$. \\
{\bf 1)} If $A$ is similar to $B$ then $\Det(A) = \Det(B)$. \\
{\bf 2)} If $A$ is invertible, then $\Det(A)=\det(A)$. \\
{\bf 3)} If $P$ is an orthogonal projection onto a subspace then $\Det(P)=1$. \\
{\bf 4)} If $A$ is real and normal, then $|\Det(A)| = |A(Q)|_k$ if $k={\rm ran}(A)$.  \\
{\bf 5)} $\Det(A^T) = \Det(A)$ and $\Det(A^*)=\overline{\Det(A)}$. \\
{\bf 6)} $\Det$ is discontinuous on $M(n,\R)$ or $M(n,\C)$ for $n \geq 2$. \\
{\bf 7)} The pseudo-inverse of a normal matrix satisfies $\Det(A^+)=1/\Det(A)$. \\
{\bf 8)} $\Det(A) = p_k$, where $p_A(x) = \det(A-x) = p_0 (-x)^n + \cdots + p_k (-x)^k$. \\
{\bf 9)} $\Det(A) = \tr(\Lambda^k A)$, if $A$ has exactly $k$ nonzero eigenvalues. \\
{\bf 10)} For any $A$ and $m \in {\bf N}$, one has $\Det(A^m) = \Det^m(A)$. \\
{\bf 11)} For $A,B \in M(n,R)$ then $\Det(A^T B) = \Det(A B^T)$. \\
{\bf 12)} $\Det(A)$ is never zero and if $A=A^*$, then $\Det(A)$ is real. \\
{\bf 13)} For block diagonal $A={\rm Diag}(A_1,\dots,A_k)$, $\Det(A)=\prod_i \Det(A_i)$.  \\
{\bf 14)} For real skew $A$, the pseudo Pfaffian satisfies ${\rm Pf}^2(A)=\Det(A)$. 
\label{Proposition}
\end{propo}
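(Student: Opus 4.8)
The plan is to reduce every one of the fourteen assertions to the defining identity $\Det(A)=\prod_{\lambda_i\neq 0}\lambda_i$ (the product over the nonzero eigenvalues of $A$ counted with algebraic multiplicity, the empty product being $1$), using only three standard tools: unitary triangularization $A=UTU^*$ over $\C$, so that the spectrum with multiplicities is visible and behaves predictably under conjugation and under polynomial maps $A\mapsto q(A)$; the factorization $p_A(x)=(-x)^{n-k}\prod_{\lambda_i\neq 0}(\lambda_i-x)$ of the characteristic polynomial when exactly $k$ eigenvalues are nonzero; and the multilinear identity $\tr(\Lambda^k A)=e_k(\lambda_1,\dots,\lambda_n)$, the $k$-th elementary symmetric function of the eigenvalues, which one reads off from the induced triangular form of $\Lambda^k A$. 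Granting these, parts 1),2),3),5),12) and 13) are each one line: similar matrices have the same spectrum (1); an invertible matrix has no zero eigenvalue, so the product of its nonzero eigenvalues is $\det(A)$ (2); an orthogonal projection has spectrum contained in $\{0,1\}$, so that product is $1$ (3); $A^T$ has the spectrum of $A$ and $A^*$ its complex conjugate (5); a product of nonzero, resp. real, numbers is nonzero, resp. real, and $\Det$ is such a product (12); and the spectrum of ${\rm Diag}(A_1,\dots,A_k)$ is the disjoint union of the spectra of the $A_i$ (13).

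I would then handle the mildly-less-immediate points. For 6) I exhibit $A_t={\rm Diag}(1,t,0,\dots,0)\in M(n,\cdot)$ with $n\geq 2$: $\Det(A_t)=t\to 0$ while $\Det(A_0)=1$, so $\Det$ is discontinuous. Parts 8) and 9) are really the same fact: the factorization above shows the lowest non-vanishing coefficient of $p_A$ is $\prod_{\lambda_i\neq 0}\lambda_i=\Det(A)$, and when exactly $k$ eigenvalues are nonzero every monomial in $e_k(\lambda)$ other than $\prod_{\lambda_i\neq 0}\lambda_i$ carries a zero factor, so $\tr(\Lambda^k A)=e_k(\lambda)=\Det(A)$. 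For 10) I pass to Schur form $A=UTU^*$, note that $A^m=UT^mU^*$ has eigenvalues $\lambda_i^m$ with unchanged multiplicities, and compute $\prod_{\lambda_i^m\neq 0}\lambda_i^m=\bigl(\prod_{\lambda_i\neq 0}\lambda_i\bigr)^m$ (a nilpotent $A$ has nilpotent $A^m$ and both sides equal $1$). For 7) I use that a normal $A$ is unitarily diagonalizable, $A=U\Lambda U^*$, whence its Moore--Penrose inverse is $A^+=U\Lambda^+ U^*$ with $\Lambda^+$ inverting the nonzero diagonal entries and fixing the zeros, so $\Det(A^+)=\prod_{\lambda_i\neq 0}\lambda_i^{-1}=1/\Det(A)$ (and $1/1=1$ when $A=0$); normality is essential here.

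The three parts requiring an actual idea are 4), 11) and 14). For 4): the spectral theorem for real normal matrices makes $A$ orthogonally conjugate to a block-diagonal matrix with $1\times 1$ blocks $[\lambda_i]$ and $2\times 2$ blocks $\bigl[\begin{smallmatrix} a & -b \\ b & a \end{smallmatrix}\bigr]$ of modulus $\sqrt{a^2+b^2}=|\lambda|$; taking $Q$ to be a unit cube aligned with the corresponding orthonormal eigenbasis, $A(Q)$ is a rectangular box whose nonzero side lengths are the $|\lambda_i|$ (a $2\times 2$ block contributing $|\lambda|$ twice, i.e. $|\lambda|\,|\overline\lambda|$, and a zero eigenvalue collapsing one direction), so $|A(Q)|_k=\prod_{\lambda_i\neq 0}|\lambda_i|=|\Det(A)|$ --- equivalently, this is the product of the nonzero singular values of $A$. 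For 14): the block normal form recalled in the preamble writes a real skew $A$ as $\bigoplus_j B_j$ with $B_j=\bigl[\begin{smallmatrix} 0 & \lambda_j \\ -\lambda_j & 0 \end{smallmatrix}\bigr]$ having eigenvalues $\pm i\lambda_j$, so $\Det(A)=\prod_{\lambda_j\neq 0}(i\lambda_j)(-i\lambda_j)=\prod_{\lambda_j\neq 0}\lambda_j^{2}={\rm Pf}(A)^2$, with the zero matrix covered by the convention ${\rm Pf}(0)=1=\Det(0)$ and orthogonal conjugation handled by part 1). The main obstacle is 11), which I would isolate as a lemma: for square matrices $X,Y$ of equal size one has $\det(tI-XY)=\det(tI-YX)$, obtained by evaluating $\det\bigl[\begin{smallmatrix} tI & X \\ Y & I \end{smallmatrix}\bigr]$ via both Schur complements (valid for $t\neq 0$, hence for all $t$ by polynomial continuity); thus $XY$ and $YX$ share a characteristic polynomial and, a fortiori, a pseudo-determinant. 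Applying this with $X=A^T,\ Y=B$ gives $\Det(A^T B)=\Det(BA^T)$, and since $AB^T=(BA^T)^T$, part 5) yields $\Det(AB^T)=\Det(BA^T)=\Det(A^T B)$, which is 11).
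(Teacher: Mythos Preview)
Your proof is correct and follows essentially the same approach as the paper: reduce each assertion to the definition $\Det(A)=\prod_{\lambda_i\neq 0}\lambda_i$ via the spectrum, using Schur/Jordan/spectral normal forms as needed. In a few places you supply more detail than the paper does---notably your Schur-complement argument for 11), your direct use of $\tr(\Lambda^k A)=e_k(\lambda)$ for 9), and your explicit treatment of the $2\times 2$ rotation blocks in the real normal form for 4)---but the underlying ideas are the same.
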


\begin{proof}
{\bf 1)} The eigenvalues of similar matrices are the same. \\
{\bf 2)} We use the definition and the fact that the classical determinant is the product of the eigenvalues. \\
{\bf 3)} If we diagonalize $P$, we get a matrix with only $1$ or $0$ in the diagonal. In the case of $P=0$
         the result holds also by definition. \\
{\bf 4)} $\Det(A)$ is basis independent and self-adjoint matrices can be diagonalized.
         For an orthogonal projection in particular, the pseudo-determinant is $1$. \\
{\bf 5)} The eigenvalues are the same for $A$ and $A^T$ and 
         $A^*$ has the complex conjugate eigenvalues of $A$. \\
{\bf 6)} The functional is already discontinuous for $n=2$, where  \\
         $\Det(\left[ \begin{array}{cc} a & 0 \\ 0 & 2 \end{array} \right]) = 2a$ 
         for $a \neq 0$ and $\Det(\left[ \begin{array}{cc} a & 0 \\ 0 & 2 \end{array} \right])=2$ for $a=0$. \\
{\bf 7)} Use that the pseudo inverse has the nonzero eigenvalues $\lambda_j^{-1}$ if $\lambda_j$ 
         are the eigenvalues of $A$.  For non normal matrices this can be false: 
         the matrix $A=\left[ \begin{array}{cc} 1 & 1 \\ 0 & 0 \end{array} \right]$ has the eigenvalues $0,1$.
         Its pseudo inverse $A^+ = \left[ \begin{array}{cc} 1/2 & 0 \\ 1/2 & 0 \end{array} \right]$
         has eigenvalues $0,1/2$. \\
{\bf 8)} Write $p_A(x) = \prod_j (\lambda_j-x) $, where $\lambda_j$ runs over the set of nonzero eigenvalues. \\
{\bf 9)} This follows from the previous step and the fact that $\tr(\Lambda^k A) = p_k$, a fact which 
         can be deduced from $\det(1+A) = \sum_{j=0}^n \tr(\Lambda^j A))$ (see i.e. \cite{RSIV} p.322), an
         identity which allows to define the determinant $\det(1+A)$ in some infinite dimensional setups.
         The result holds also in the nilpotent case $k=0$ as $\Lambda^0(A)$ is the $1 \times 1$ matrix $1$.  \\
{\bf 10)} Normal matrices $A$ can be diagonalized over the field of complex numbers: there exists a unitary $U$ such that $U^* A U$ is diagonal. 
          In general, we can bring the matrix in Jordan normal form. The result is also true for $n=0$, where both sides are $1$.  \\
{\bf 11)} The matrices $A^T B$ and $A B^T$ have the same nonzero eigenvalues
          because their characteristic polynomials differ by a factor $\lambda^k$ only. \\
{\bf 12)} The first statement is by definition and can also be seen from 8) as the characteristic polynomial is
          never zero. For $A=A^*$, then the eigenvalues of $A$ are real by the spectral theorem.  \\
{\bf 13)} Group the eigenvalues of $A$ as eigenvalues of $A_i$. Also here, the formula works if some of the $A_i$ are nilpotent. \\
{\bf 14)} $A$ is orthogonally conjugated to a block diagonal matrix $B$. Now square this and use part $1)$. 
\end{proof} 

{\bf Remarks.} \\
{\bf 1)} We can compute pseudo-determinants almost as fast as determinants because we only need to know the 
characteristic polynomial. Some computer algebra code can be found in the Appendix.
We can find $\Det(A)$ also by row reduction if we do safe row reduction steps. 
As mentioned below, we have to make sure that we do not count any sign changes when swapping two parallel 
rows and do scalings of dependent rows, nor subtract a row from a parallel row. 
When doing safe row reductions, we end up with a matrix which looks like a 
row reduced echelon matrix but where parallel rows can appear. For such a reduced matrix, the eigenvalues can 
be computed fast.  \\
{\bf 2)} The least square solution formula $Ax=A (A^T A)^{-1} A^T y = P y$ is pivotal in statistics and features a projection 
matrix $P$ with pseudo-determinant $1$. We mention this because the least square inverse is often also called pseudo inverse
even so it has nothing to do with the Moore-Penrose pseudo inverse in general.  The former deals with overdetermined systems 
$Ax=y$, the later is defined for square matrices $A$ only. \\
{\bf 3)} Define $\log^+|x| = \log|x|$ for $x \neq 0$ and $\log^+|0|=0$. 
If $A$ is normal, we can define $\log^+|A|$ by diagonalizing $A$ and get
$$   \tr(\log^+|A|) = \log|\Det(A)| \; , $$
Also this is consistent with nilpotent $A$ and an other reason
why the definition $\Det(A)=1$ for nilpotent $A$ is natural. \\
{\bf 4)} For finite simple graphs, the Laplace operator $L$ always has a kernel. It is one-dimensional if the 
graph is connected. The pseudo-determinant is considered a measure for complexity because it allows to count the
number of maximal spanning trees in the graph. The Laplace-Beltrami operator on forms has a large kernel in general. 
Its dimension is the sum of the Betti numbers of the graph. Studying this matrix associated to a graph was the 
main motivation for us to look at Cauchy-Binet in the singular case.  \\
{\bf 5)} The fact that $F^T G$ and $F G^T$ have the same nonzero eigenvalues is a consequence of 
$p_A(x) = (-x)^k p_B(x))$ if $A=F^T G$ and $B=F G^T$.
For $x=-1$, this leads to ``the most important identity in mathematics (Deift)" \cite{Tao2012} 
$\det(1+F G)=\det(1+G F)$ for $n \times m$ and $m \times n$ matrices $F,G$. \cite{Deift1978} illustrates how rich such 
identities can be. Switching operators is useful to construct the spectrum of the quantum harmonic oscillator,
for the numerical QR algorithm $A=QR \to RQ$ used to diagonalize a matrix, or to construct new solutions 
to nonlinear PDE's with so called B\"acklund transformations. \\

Lets look at some pitfalls: \\

{\bf 1)} One could think that if $B$ is invertible, then 
$\Det(A B) = \Det(A) \det(B)$. This is false: a counter example is 
$A=\left[ \begin{array}{cc} 1 & 0 \\ 0 & 0 \end{array} \right]$
and $B=\left[ \begin{array}{cc} 1 & 0 \\ 0 & 2 \end{array} \right]$.  It is even false for unitary $B$ like
$A=\left[ \begin{array}{cc} 1 & 1 \\ 1 & 1 \end{array} \right]$ and 
$B=\left[ \begin{array}{cc} 0 & 1 \\ 1 & 0 \end{array} \right]$,
where $AB=A$ and $\det(B)=-1$. This example is the row swapping pitfall using the elementary swapping matrix $B$. 
We mention this pitfall because we actually tried to prove this first by repeating the textbook proof of 
$\det(A B) = \det(A) \det(B)$ in which one makes row reduction on the augmented matrix $[A|B]$ until 
the matrix $B$ is the identity matrix. As we see below, pseudo-determinants need safe row reduction operations. \\
{\bf 2)} Even basic multi-linearity fails, even if we apply it to nonzero rows. 
An example like $A=\left[ \begin{array}{cc} 1 & 1 \\ 1 & 1 \\ \end{array} \right]$
shows that if we scale a row by $\lambda$ then the pseudo-determinant gets scaled 
by $(1+\lambda)/2$ and not by $\lambda$. This is just an example and not a general rule. It is difficult to 
say in general how scaling a linearly dependent row affects the pseudo-determinant. \\
{\bf 3)} The case of block diagonal matrices $A={\rm Diag}(A_1,A_2)$ with square matrices 
$A_1,A_2$ again illustrates that it is useful to define $\Det(A)=1$ for nilpotent matrices. 
This renders $\det({\rm Diag}(A_1,A_2)) = \Det(A_1) \cdot \Det(A_2)=0$ true. Without the assumption like 
assuming $\Det(0)=0$, this block diagonal result would be wrong. \\
{\bf 4)} While $F^T G$ and $F G^T$ are essentially isospectral and therefore 
have the same number $k$ of nonzero eigenvalues, it is not true in 
general that ${\rm rank}(F^T G)$ and ${\rm rank}(F G^T)$ are the same. It is possible 
that $k$ is smaller than both: let $F$ be the identity matrix $I_3$ and
let $G=\left[ \begin{array}{ccc}1&-1&-1\\1&-1&-1\\-2&2&-1\\ \end{array} \right]$, then 
$p_G(x) = -x^2-x^3$ so that we have to take $k=1$ in Theorem~\ref{Theorem1}) 
and $\Det(F^T G) = -1$. But ${\rm rank}(F^T G)={\rm rank}(F G^T)=2$.

\section{Examples} 

{\bf 1)} An extreme example of Theorem~\ref{Theorem1}) is obtained when $F,G$ are two column vectors in $\R^n$
and where the left hand side is the ``dot product" $F^T G$. It agrees with 
$$                  \Det \left[ \begin{array}{cccc} 
                                      F_1 G_1 & F_1 G_2 & \cdots & F_1 G_n \\
                                      F_2 G_1 & F_2 G_2 & \cdots & F_2 G_n \\
                                       \cdots & \cdots  & \cdots & \cdots  \\
                                      F_n G_1 & F_n G_2 & \cdots & F_n G_n 
                                      \end{array} \right]  \;  $$
which can be seen as the pseudo determinant analogue of a {\bf Gram determinant} \cite{GohbergGoldberg}
and which has the only non-zero eigenvalue $F^T G$. 
While the characteristic polynomial of the $1 \times 1$ matrix
$A=F^T G$ is $p_A(\lambda) = A-\lambda$, the characteristic polynomial of 
the $n \times n$ matrix $F G^T$ is $(-\lambda)^{n-1}(A-\lambda)$.  \\

{\bf 2)} Assume $F$ is a $n \times m$ matrix for which every row $v_j$ of $F$ is a multiple $a_j$ of $v$ and
that $G$ is a $n \times m$ matrix for which every row $w_j$ of $G$ is a multiple $b_j$ of a vector $w$. Then
$\Det(F^T G) =  (v \cdot w)  (a \cdot b)$. This is the same than the right hand side of Cauchy-Binet.
Since there is only nonzero eigenvalue, it has to be the trace of $F^T G$.
The later matrix has the entries $(v \cdot w) a_i b_j$.
The left hand side of Cauchy-Binet is $(\sum_{i} v_i w_i ) (\sum_j a_j w_j)$.
The right hand side of Cauchy-Binet is $\sum_{i,j} (a_j v_i)  (b_j w_i)$. These two sums
are the same.  The same works also if $F$ is rank $1$ and $G$ is arbitrary.
Since $F^T G$ has rank $1$, we have $\Det(F^T G) = \tr(F^T G) = \sum_{i,j} F_{ij} G_{ij}$.  \\

{\bf 3)} The two $3 \times 2$ matrices 
$$ F = \left[ \begin{array}{cc} 1 & 4 \\ 
                                2 & 5 \\ 
                                3 & 6 \end{array} \right] \; , 
   G = \left[ \begin{array}{cc} 1 & 0 \\ 
                                1 & 1 \\ 
                                1 & 0 \end{array} \right]  \;  $$
give
$$ F^T G = \left[ \begin{array}{cc} 6 & 2 \\ 
                                   15 & 5 \\ \end{array} \right],
   F G^T = \left[ \begin{array}{ccc} 1 & 5 & 1 \\ 
                                     2 & 7 & 2 \\
                                     3 & 9 & 3 \end{array} \right] \; . $$
Both are singular matrices with the same determinant $\Det(F^T G) = \Det(F G^T) = 11$. 
If $F,G$ are $n \times 2$ matrices with column vectors $a,b$ and $c,d$, then the 
classical Cauchy-Binet identity is 
$$ (a \cdot c) (b \cdot d) - (a \cdot d) (b \cdot c) = \sum_{1 \leq i,j \leq n} (a_i b_j-a_j b_i) (c_i d_j - c_j d_i) \;  $$
which has the special case $(a \cdot c)^2 - |a|^2 |c|^2 = |a \times c|^2$ in three dimensions for the cross product 
which expresses the Pythagoras theorem $\cos^2(\alpha) - 1 = \sin^2(\alpha)$. 
Assume now that one of the matrices has rank $1$ like in the case $a=c$. The classical identity is then $0$ on both 
sides. The new Cauchy-Binet identity is not interesting in this case: 
since $F^T G$ has the eigenvalues $0,a(\sum_i b_i + \sum_i c_i)$, 
we have $\Det(F^T G) = a(\sum_i b_i + \sum_i c_i)$. The right hand side is 
$\sum_{|P|=1} \det(F_P) \det(G_P) =  \sum_{i} a b_i + a c_i$. \\

{\bf 4)} For
$$ F=\left[ \begin{array}{ccc} 3 & 1 & 1 \\ 2 & 2 & 2 \\ \end{array} \right],
   G=\left[ \begin{array}{ccc} 0 & 0 & 2 \\ 0 & 3 & 2 \\ \end{array} \right] $$
we have 
$$ F^T G = \left[ \begin{array}{ccc} 0 & 6 & 10 \\ 0 & 6 & 6 \\ 0 & 6 & 6 \\ \end{array} \right], 
   F G^T = \left[ \begin{array}{cc} 2 & 5 \\ 4 & 10 \\ \end{array} \right]  \; , $$
which both have the pseudo-determinant $12$. Now, the $2 \times 2$ square sub-matrices of $F$ and $G$ are 
$$ F_1 = \left[ \begin{array}{cc} 3 & 1 \\ 2 & 2 \\ \end{array} \right],
   F_2 = \left[ \begin{array}{cc} 3 & 1 \\ 2 & 2 \\ \end{array} \right],
   F_3 = \left[ \begin{array}{cc} 1 & 1 \\ 2 & 2 \\ \end{array} \right] $$
$$ G_1 = \left[ \begin{array}{cc} 0 & 0 \\ 0 & 3 \\ \end{array} \right],
   G_2 = \left[ \begin{array}{cc} 0 & 2 \\ 0 & 2 \\ \end{array} \right],
   G_3 = \left[ \begin{array}{cc} 0 & 2 \\ 3 & 2 \\ \end{array} \right] \; . $$
But all their products
$$ F_1^T \cdot G_1 = \left[ \begin{array}{cc} 0 & 6 \\ 0 & 6 \\ \end{array} \right] \; ,
   F_2^T \cdot G_2 = \left[ \begin{array}{cc} 0 & 10 \\ 0 & 6 \\ \end{array} \right] \; , 
   F_3^T \cdot G_3 = \left[ \begin{array}{cc} 6 & 6 \\ 6 & 6 \\ \end{array} \right]  \;    $$
have determinant $0$. The reason is that while $\rank(F)=\rank(G)=\rank(F^T G)=2$,
we have $\rank(F G^T)=1$. We see that we have to take the sum over the products $\det(F_P^T G_P)$ where
$P$ runs over all $1 \times 1$ matrices. And indeed, now the sum is $12$ too. This example
shows that even so $F^T G$ and $F G^T$ can have different rank,
their pseudo-determinants are still the same. Of course, this follows in general from the fact
that all the nonzero eigenvalues are the same. \\

{\bf 5)} For non-invertible $2 \times 2$ matrices $A$, we have $\Det(A) = \tr(A)$. For example,
$$ \Det( \left[ \begin{array}{cc} 5 & 6 \\ 10 & 12 \end{array} \right] ) = 17 \; . $$
The same is true for matrices for which all columns are parallel like 
$$ \Det( \left[ \begin{array}{ccccc} 1 & 2 & 3 & 4  \\
                                     1 & 2 & 3 & 4  \\
                                     1 & 2 & 3 & 4  \\
                                     1 & 2 & 3 & 4  \end{array} \right] ) = \tr(A) = 10 \; . $$
{\bf 6)}
The two matrices 
$$ F = \left[ \begin{array}{cc} 2 & 5 \\
                                0 & 0 \\
                                0 & 0  \end{array} \right], 
   G = \left[ \begin{array}{cc} 3 & 7 \\
                                0 & 1 \\
                                0 & 0  \end{array} \right] $$
have different rank. The determinant of $F^T G = \left[ \begin{array}{cc} 6 & 14 \\ 15 & 35 \end{array}  \right]$
is equal to its trace $41$, as seen in the previous example. 
This agrees with the sum $\sum_{|P|=1}\det(F_P) \det(G_P) = 2 \cdot 3 + 5 \cdot 7 = 41$.  \\

{\bf 7)} If $A$ is a square van der Monde matrix defined by $n$ numbers $a_i$,
then the determinant of $A$ is $\prod_{i<j} (a_i-a_j)$. If the numbers are not different, like for
$$ A = \left[ \begin{array}{ccc} 1 & 1 & 1\\
                                a & b & a \\
                              a^2 & b^2 & a^2 \end{array} \right] \; , $$
we have $\Det(A) = (b-a)(b+a+a b)$ if $a \neq b$ and $\Det(A) = \tr(A) = 1+a+a^2$ if $a=b$.  \\

{\bf 8)} As defined, the result also holds for cases like the nilpotent 
$F=\left[ \begin{array}{cc} -1 & -1 \\ 1 & 1 \\ \end{array} \right]$ and
$G=\left[ \begin{array}{cc}  1 &  0 \\ 0 & 1 \\ \end{array} \right]$ in which case $\Det(F^T G) = \Det(F)=1$
and $\sum_{|P|=0} \det(F_P) \det(G_P)=1$. 

\section{Consequences} 

Theorem~\ref{Theorem1}) implies a symmetry for the nonlinear pairing $\langle F,G \rangle = \Det(F^T G)$: 

\begin{coro}[Duality]
If $F,G$ are matrices of the same shape, then $\Det(F^T G) = \Det(F G^T)$.
\end{coro}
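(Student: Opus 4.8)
The plan is to apply Theorem~\ref{Theorem1}) twice and to observe that the two resulting sums are literally the same sum, reindexed. Write $F,G$ for the common $n\times m$ shape, so that $F^T G$ is $m\times m$ and $FG^T$ is $n\times n$. Applying Theorem~\ref{Theorem1}) directly to the pair $(F,G)$ gives $\Det(F^T G)=\sum_P \det(F_P)\det(G_P)$, where $P$ ranges over the $k\times k$ masks of $F$ — that is, over choices of $k$ rows from $\{1,\dots,n\}$ together with $k$ columns from $\{1,\dots,m\}$ — and $k$ is the number of nonzero eigenvalues of $F^T G$.

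Next, note that $FG^T=(F^T)^T(G^T)$, so Theorem~\ref{Theorem1}) applied to the $m\times n$ matrices $F^T$ and $G^T$ yields $\Det(FG^T)=\sum_Q \det((F^T)_Q)\det((G^T)_Q)$, the sum now running over the $k'\times k'$ masks $Q$ of $F^T$, where $k'$ is the number of nonzero eigenvalues of $(F^T)^T(G^T)=FG^T$. The point that must be pinned down here is that $k'=k$: the matrices $F^T G$ and $FG^T$ are essentially isospectral, their characteristic polynomials differing only by a power of $x$ (part 11) of Proposition~\ref{Proposition}) and the remark following it), so they have the same number of nonzero eigenvalues. Hence both sums are taken over masks of the same size $k$.

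Finally comes the bookkeeping. A $k\times k$ mask $Q$ of $F^T$ is given by a set $R\subseteq\{1,\dots,m\}$ of $k$ rows and a set $C\subseteq\{1,\dots,n\}$ of $k$ columns, and a direct check of entries shows that $(F^T)_Q$ is exactly the transpose of the submatrix $F_P$ of $F$ cut out by rows $C$ and columns $R$; likewise $(G^T)_Q=(G_P)^T$. Since a square matrix and its transpose have equal determinant, $\det((F^T)_Q)\det((G^T)_Q)=\det(F_P)\det(G_P)$, and as $Q$ ranges over all $k\times k$ masks of $F^T$ the transposed mask $P$ ranges over all $k\times k$ masks of $F$. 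The two sums therefore agree term by term, giving $\Det(F^T G)=\Det(FG^T)$. I expect the only genuinely substantive step to be the identity $k'=k$ of the two exponents; the rest is mask-indexing. (One could of course shortcut the whole argument: essential isospectrality already says $F^T G$ and $FG^T$ have the same product of nonzero eigenvalues, hence the same pseudo-determinant — but the route above has the virtue of displaying the common combinatorial value $\sum_P\det(F_P)\det(G_P)$ of both sides.)
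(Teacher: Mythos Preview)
Your argument is correct, but it takes a different route from the paper. The paper's proof is precisely the one-line shortcut you mention at the end: since the characteristic polynomials of $F^T G$ and $FG^T$ satisfy $p_{F^T G}(x)=\pm x^{l} p_{FG^T}(x)$ for some integer $l$, the two matrices have the same multiset of nonzero eigenvalues and hence the same pseudo-determinant. Your approach instead applies Theorem~\ref{Theorem1}) twice, once to $(F,G)$ and once to $(F^T,G^T)$, and matches the resulting minor-sums by a mask bijection; the essential-isospectrality fact still enters, but only to pin down $k=k'$. What your route buys is that it actually makes good on the paper's framing sentence ``Theorem~\ref{Theorem1}) implies a symmetry\ldots'' by genuinely deriving the duality \emph{from} Theorem~\ref{Theorem1}), and it exhibits the common combinatorial value $\sum_P \det(F_P)\det(G_P)$ shared by both sides. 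The paper's proof is quicker but, strictly speaking, bypasses Theorem~\ref{Theorem1}) entirely.
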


\begin{proof} 
The characteristic polynomials of $F^T G$ and $F G^T$ satisfy $p_{F^T G}(x) = \pm x^l p_{F G^T}(x)$ 
for some integer $l$.
\end{proof} 

{\bf Remarks:} \\
{\bf 1)} The matrices $F^T G$ and $F G^T$ have in general different shapes. The result is 
also true in the square matrix case with the usual determinant because both sides are then $\det(F) \det(G)$. \\
{\bf 2)} We know that $M(n,m)$ with inner product $\tr(F^* G)$ is a Hilbert
space with the Hilbert-Schmidt product $\sum_{i,j} \overline{F_{ij}} G_{ij}$ which is sometimes 
called Frobenius product. 
While the pairing $\langle A,B \rangle = \Det(A^T B)$ is not an inner product,
it inspires to define $||A||^2 := \Det(A^T A) > 0$. It satisfies
$\langle A,B \rangle=\langle B,A \rangle$, 
$d(A,B)=||A-B||$ is not a metric as he triangle inequality does not hold and $||0||^2=1$. 

\begin{coro}[Kirchhoff]
For a connected finite simple graph with Laplacian $L$ and $m$ vertices, $\Det(L)/m$ is the
number of spanning trees in the graph.
\label{Kirchhoff}
\end{coro}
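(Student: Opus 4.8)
The plan is to derive this directly from Theorem~\ref{Theorem1}) applied to $F=G$ the incidence matrix of the graph. First I would fix an orientation on the edges so that the incidence matrix $F$ is an $|E| \times |V|$ matrix (or its transpose, depending on convention) whose column $v$ has a $+1$ and a $-1$ in the rows corresponding to edges incident to $v$. Then $L = F^T F$ is the (scalar) Laplacian of the graph, and Theorem~\ref{Theorem1}) gives $\Det(L) = \Det(F^T F) = \sum_P \det(F_P)^2$, where $P$ ranges over all $k \times k$ submatrix masks of $F$ with $k$ the number of nonzero eigenvalues of $L$. Since the graph is connected, $\ker(L)$ is one-dimensional (spanned by the all-ones vector), so $L$ has rank $m-1$, hence $k = m-1$ and each $F_P$ is an $(m-1) \times (m-1)$ submatrix obtained by selecting $m-1$ of the edge-rows of $F$ and deleting one vertex-column.

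The key combinatorial step is the classical fact that for an incidence matrix, $\det(F_P) \in \{-1,0,+1\}$, and $\det(F_P)^2 = 1$ precisely when the $m-1$ chosen edges form a spanning tree of the graph. I would prove this by the standard argument: an $(m-1) \times (m-1)$ submatrix of the incidence matrix is nonsingular if and only if the selected edge set, viewed on the $m-1$ retained vertices plus the deleted one, contains no cycle; since there are $m-1$ edges on $m$ vertices, acyclicity is equivalent to being a spanning tree; and when it is a spanning tree one can order vertices (leaves first) so that the submatrix is triangular with $\pm 1$ on the diagonal, giving determinant $\pm 1$. Therefore $\sum_P \det(F_P)^2$ counts the spanning trees of the graph, each once for every choice of deleted vertex — but the deleted vertex is not actually free here: in Theorem~\ref{Theorem1}) the masks $P$ of $F$ select $k=m-1$ rows and $k=m-1$ columns, so each $P$ already fixes which vertex-column is omitted.

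This last point is where I expect the main subtlety, and it is exactly the factor $m$ in the statement. The sum $\sum_P \det(F_P)^2$ runs over all ways of choosing $m-1$ edge-rows \emph{and} all ways of choosing $m-1$ vertex-columns, i.e. over all ways of deleting one vertex; for each deleted vertex $w$ the inner sum over edge-choices equals the number of spanning trees (this is the classical reduced matrix-tree statement, which I would either invoke or re-derive from the triangularity argument above, noting that deleting any one vertex from a spanning tree's incidence submatrix leaves a nonsingular matrix). Hence $\Det(L) = \sum_{w \in V} (\#\text{spanning trees}) = m \cdot \tau(G)$, so $\Det(L)/m = \tau(G)$. An alternative and cleaner route, avoiding the per-vertex bookkeeping, is to use that for a connected graph $\Det(L) = m \cdot \det(L_w)$ where $L_w$ is any principal minor of $L$ obtained by deleting row and column $w$ — this follows because the nonzero eigenvalues of $L$ are $m$ times those... more directly: $L$ has eigenvalue $0$ with eigenvector $\mathbf{1}/\sqrt m$, and $\Det(L) = \prod_{i=2}^m \lambda_i$, while the matrix-tree theorem's standard cofactor form gives $\det(L_w) = \prod_{i=2}^m \lambda_i / m$; I would cite \cite{Biggs} for the relation $\Det(L) = m\,\det(L_w)$ and then apply the \emph{classical} Cauchy-Binet~(\ref{classicalcauchybinet}) to the invertible matrix $L_w = \tilde F^T \tilde F$ (with $\tilde F$ the incidence matrix with column $w$ removed) to get $\det(L_w) = \sum_P \det(\tilde F_P)^2 = \tau(G)$. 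Either way the content is the $\{0,1\}$-dichotomy for incidence minors plus accounting for the rank defect, and the only real obstacle is being careful that ``minors of $F$'' in Theorem~\ref{Theorem1}) means square submatrices selecting both rows and columns, so that the one-dimensional kernel contributes the harmless global factor $m$.
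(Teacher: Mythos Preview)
Your proposal is correct and follows essentially the same route as the paper: apply Theorem~\ref{Theorem1}) with $F=G$ the incidence matrix, use connectedness to get $k=m-1$, interpret each mask $P$ as a choice of $m-1$ edges together with a deleted vertex (the ``root''), invoke the $\{0,\pm1\}$ dichotomy for square incidence minors, and conclude that $\Det(L)$ counts rooted spanning trees, giving the factor $m$. Your write-up is in fact more explicit than the paper's about both the triangularization argument for the dichotomy and the bookkeeping behind the factor $m$; the alternative detour through $\Det(L)=m\det(L_w)$ and classical Cauchy-Binet is also sound but is not the route the paper takes in the proof itself (the paper mentions it only as background in the introduction).
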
 
\begin{proof}
If the graph $(V,E)$ has $|V|=m$ vertices and $|E|=n$ edges, then 
the incidence matrix $F$ is a $n \times m$ matrix. This matrix depends 
on a given orientation of the edges and is a discrete
gradient which assigns to a function on vertices a function on edges
with $F f( (a,b) ) = f(b)-f(a)$. Its adjoint $F^T$ is a discrete divergence.
The combination $L = F^T F = {\rm div}( {\rm grad}( f))$ is the scalar Laplacian and agrees
with the matrix $B-A$, where $B$ is the diagonal vertex degree matrix and $A$ is the adjacency matrix.
Unlike $F$, the matrix $L$ does not depend on the chosen edge orientation. 
A connected graph $L$ has only one eigenvalue $0$ with the constant eigenvector $1$. 
The matrix $L$ therefore has rank $k=n-1$. Choosing a pattern $P$ means to choose a vertex 
$q$ as a root and to select $m-1$ edges. If $F_P$ has determinant $1$ or $-1$, it represents a connected 
tree rooted at $q$ because we have $m-1$ edges and $m$ vertices.
Theorem~\ref{Theorem1}) equals $\Det(L)$ and counts the rooted spanning trees and consequently,
$\Det(L)/m$ is the number of spanning trees in the graph. 
\end{proof}

Here is an obvious corollary of Theorem~\ref{Theorem1}): 

\begin{coro}
If $A$ is any matrix of rank $k$, then $\Det(A^T A) = \Det(A A^T) = \sum_P \det^2(A_P)$,
where $A_P$ runs over all $k \times k$ minors. 
\label{coro1}
\end{coro}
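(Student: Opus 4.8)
The plan is to read this off from Theorem~\ref{Theorem1}) by specializing to $F=G=A$; the only genuine step is to verify that the integer $k$ that Theorem~\ref{Theorem1}) attaches to $A^TA$ is precisely $\rank(A)$. First I would note that $A^TA$ is self-adjoint and positive semidefinite, so by the spectral theorem it is diagonalizable with real nonnegative eigenvalues. Hence its number of nonzero eigenvalues equals $\rank(A^TA)$, and since $\rank(A^TA)=\rank(A)=k$ the matrix $A^TA$ has exactly $k$ nonzero eigenvalues. By Proposition~\ref{Proposition}) part 8) the lowest-order nonvanishing term of the characteristic polynomial of $A^TA$ is then $p_k(-x)^{m-k}$, so the index set in Theorem~\ref{Theorem1}) consists of all $k\times k$ masks $P$ with $k=\rank(A)$.

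With this identification, Theorem~\ref{Theorem1}) applied to the pair $F=G=A$ yields at once
$$ \Det(A^TA)=\sum_P \det(A_P)\det(A_P)=\sum_P \det^2(A_P), $$
the sum running over all $k\times k$ minors of $A$. This is the claimed formula for $A^TA$.

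For the $AA^T$ half I would simply invoke the Duality corollary, which gives $\Det(A^TA)=\Det(AA^T)$ outright. Alternatively, apply Theorem~\ref{Theorem1}) to $F=G=A^T$: here $\rank(AA^T)=\rank(A^T)=k$ as well, the $k\times k$ masks of $A^T$ are exactly the transposes of the $k\times k$ masks of $A$, and $\det^2((A^T)_Q)=\det^2(A_{Q^T})$ since a square matrix and its transpose have equal determinant, so the two sums coincide.

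The one point that is not purely formal is the first: for a general square matrix the number of nonzero eigenvalues can be strictly less than the rank --- this is exactly the phenomenon behind the nilpotent examples discussed in the paper --- so one really does need the positive semidefiniteness of $A^TA$ (equivalently, its diagonalizability with no zero Jordan blocks) to conclude that its count of nonzero eigenvalues equals $\rank(A)$. Once that is in hand, the corollary is a bookkeeping translation of Theorem~\ref{Theorem1}).
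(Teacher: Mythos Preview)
Your proof is correct and follows the same approach as the paper: specialize Theorem~\ref{Theorem1}) to $F=G=A$ and use that $A^TA$ and $AA^T$ share rank $k$ with $A$. You are in fact more careful than the paper's one-line argument, since you explicitly justify why the index $k$ of Theorem~\ref{Theorem1}) (the number of nonzero eigenvalues) coincides with $\rank(A)$ via the positive semidefiniteness of $A^TA$---a point the paper leaves implicit.
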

\begin{proof}
This is a special case of Theorem~\ref{Theorem1}),
where $F=G=A$ using the fact that the two matrices $A^T A$ and 
$A A^T$ have the same rank $k$, if $A$ has rank $k$. 
\end{proof} 

Especially, we have a Pythagoras theorem for pseudo-determinants: 

\begin{coro}[Pythagoras]
For a selfadjoint matrix $A$ of rank $k$, then
$$  \Det^2(A) = \Det(A^2) = \sum_P \det^2(A_P) \; , $$
where $A_P$ runs over all $k \times k$ minors of $A$. 
\label{Pythagoras}
\end{coro}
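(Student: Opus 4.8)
The plan is to chain two facts already available. First, $\Det(A^2)=\Det^2(A)$: this is Proposition~\ref{Proposition} part 10) with $m=2$, valid for every square matrix. Second, since $A$ is selfadjoint we have $A=A^T$, hence $A^2=A^TA$, and Corollary~\ref{coro1} applied to the matrix $A$ (equivalently, Theorem~\ref{Theorem1} with $F=G=A$) gives $\Det(A^TA)=\sum_P\det^2(A_P)$. Putting these together yields $\Det^2(A)=\Det(A^2)=\Det(A^TA)=\sum_P\det^2(A_P)$, which is the claim.

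The one step that needs a line of justification is that the minors $A_P$ appearing here are exactly $k\times k$, with $k=\rank(A)$. By Theorem~\ref{Theorem1} the masks $P$ have size equal to the number of nonzero eigenvalues of the left-hand matrix, here $A^2=A^TA$. Because $A$ is selfadjoint it is orthogonally diagonalizable with real eigenvalues $\lambda_1,\dots,\lambda_n$, so $A^2$ has eigenvalues $\lambda_1^2,\dots,\lambda_n^2$, and $\lambda_i^2\neq 0$ exactly when $\lambda_i\neq 0$. Thus $A^2$ has precisely $k$ nonzero eigenvalues, so $|P|=k$; the same computation, via $\prod_{\lambda_i\neq0}\lambda_i^2=\bigl(\prod_{\lambda_i\neq0}\lambda_i\bigr)^2$, also re-proves $\Det(A^2)=\Det^2(A)$ straight from the spectral theorem, so part 10) of the Proposition is not strictly needed.

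I do not expect a genuine obstacle: this really is a corollary. The instructive point is pinpointing where selfadjointness is used. It is not needed for $\Det(A^2)=\Det^2(A)$ (true in general) and not merely to write $A^2=A^TA$; its role is to force $\rank(A^2)=\rank(A)=k$, so that the right-hand sum ranges over genuine $k\times k$ minors of $A$. Without it this fails: a rank-one nilpotent $A$ has $A^2=0$, which Theorem~\ref{Theorem1} expands over the empty mask, not over $1\times1$ minors. Finally, one should note that ``selfadjoint'' must be read over $\R$ here, i.e.\ $A=A^T$: it is precisely the identity $A^2=A^TA$ that produces the stated form with $\det^2(A_P)$ rather than $|\det(A_P)|^2$.
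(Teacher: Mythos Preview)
Your proof is correct and follows essentially the same approach as the paper: invoke Proposition~\ref{Proposition} part 10) for $\Det^2(A)=\Det(A^2)$, use selfadjointness to identify $A^2$ with $A^TA$ (the paper writes $AA^T$), and then apply Corollary~\ref{coro1}. Your additional paragraph verifying that the mask size matches $\rank(A)$ and your remark that ``selfadjoint'' must be read as $A=A^T$ are welcome clarifications that the paper leaves implicit.
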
 

\begin{proof}
Use that $\Det^2(A) = \Det(A^2) = \Det(A A^T)$ if $A$ is selfadjoint,
as a consequence of 10) in Proposition~\ref{Proposition}).
\end{proof} 

{\bf Remarks.} \\
{\bf 1)} If $A$ is invertible, \cite{KABush} uses a special case of this identity
that if $A$ is a $(n-1) \times n$ matrix, then $\det(A A^T) = n S$ where $S$ is a 
perfect square.  \\
{\bf 2)} As in the classical case, this result can be interpreted geometrically:
the square of the $k$-volume of the parallelepiped spanned by the columns of $A$ is related to 
the squares of the volumes of projections of $A Q_I$ onto planes spanned by $Q_J$ 
where for $I=\{i_1, \dots, i_k \;\}$ the set $Q_I$ is the parallelepiped spanned by 
$e_{i_1}, \dots, e_{i_k}$ and the subsets $I,J$ of $\{1,\dots, n \;\}$ encode
a $k \times k$ sub mask $P$ of $A$. (See \cite{Morrow}). \\
{\bf 3)} Theorem~\ref{Pythagoras}) is obvious for diagonal matrices. An alternative  
proof could be to show that the right hand side is basis independent. But this exactly 
needs to go through the multi-linear approach which sees the right hand side as
a Hilbert-Schmidt norm of a $k$-Fermion matrix. \\

{\bf Examples.} \\

{\bf 1)} If $A=\left[ \begin{array}{ll} a & b \\ a & b  \end{array} \right]$, then 
$\Det(A^T A) = 2a^2+2b^2$ and agrees with the right hand side of Corollary~\ref{Pythagoras}). \\

{\bf 2)} If $A$ is invertible, the Pythagoras formula is trivial and tells  $\det(A^2) = \det^2(A)$.
If $A$ has everywhere the entry $a$ then the left hand side
is $(n a)^2$ and the right hand side adds up $n^2$ determinant squares $a^2$ of 
$1 \times 1$ matrices. \\

{\bf 3)} Corollary~\ref{Pythagoras}) can fail for non-symmetric matrices. The simplest case is 
$A=\left[ \begin{array}{cc} 1 & 2 \\ 0 & 0 \\ \end{array} \right]$, for which 
$\Det(A)=1$ but $\Det(A^T A) = \sum_{|P|=1} \det^2(A_P)=5$. The correct formula for 
non-selfadjoint matrices is $\Det(A^T A) = \sum_P \det^2(A_P)$ as in Corollary~\ref{coro1}). 
This example illustrates again that $\Det(A^T A) \neq \Det^2(A)$ in general. \\

{\bf 4)} The matrix
$$ A= \left[
                  \begin{array}{ccc}
                   0 & 4 & 4 \\
                   4 & 0 & 3 \\
                   4 & 3 & 6 \\
                  \end{array}
                  \right]  $$
has the characteristic polynomial $41 x + 6 x^2-x^3$ and has the eigenvalues 
$0,(3 \pm 5 \sqrt{2})$. The pseudo-determinant is $-41$ so that $\Det^2(A)=1681$. 
Now let's look at the determinants of all the $2 \times 2$ sub-matrices of $A$. 
\begin{center}\begin{tabular}{lll}
$A_1=\left[ \begin{array}{cc} 0 & 4 \\ 4 & 0 \\ \end{array} \right]$ &
$A_2=\left[ \begin{array}{cc} 0 & 4 \\ 4 & 3 \\ \end{array} \right]$ &
$A_3=\left[ \begin{array}{cc} 4 & 4 \\ 0 & 3 \\ \end{array} \right]$ \\ 
                                       & &                           \\ 
$A_4=\left[ \begin{array}{cc} 0 & 4 \\ 4 & 3 \\ \end{array} \right]$ &
$A_5=\left[ \begin{array}{cc} 0 & 4 \\ 4 & 6 \\ \end{array} \right]$ &
$A_6=\left[ \begin{array}{cc} 4 & 4 \\ 3 & 6 \\ \end{array} \right]$ \\ 
                                       & &                           \\ 
$A_7=\left[ \begin{array}{cc} 4 & 0 \\ 4 & 3 \\ \end{array} \right]$ &
$A_8=\left[ \begin{array}{cc} 4 & 3 \\ 4 & 6 \\ \end{array} \right]$ &
$A_9=\left[ \begin{array}{cc} 0 & 3 \\ 3 & 6 \\ \end{array} \right]$ \\
                                       & &                           \; .  \\ 
\end{tabular}\end{center} 
The determinant squares $256, 256, 144, 256, 256, 144, 144, 144, 81$ add up to $1681$.  \\

{\bf 5)} The matrix 
$$ \left[
                   \begin{array}{cccccc}
                    0 & 0 & 0 & 1 & 0 & 0 \\
                    0 & 0 & 1 & 0 & 0 & 1 \\
                    0 & 1 & 0 & 0 & 0 & 0 \\
                    1 & 0 & 0 & 0 & 1 & 0 \\
                    0 & 0 & 0 & 1 & 0 & 0 \\
                    0 & 1 & 0 & 0 & 0 & 0 \\
                   \end{array}
                   \right] $$
has $\Det(A)=4$. From the $15^2 = 225$ sub-matrices of $A$ of size $4 \times 4$, 
there are 16 which have nonzero determinant. Each of them either has 
determinant $1$ or $-1$. The sum of the determinants squared is $16$.  \\

{\bf 6)} $F=\left[ \begin{array}{cccc} 1 & 2 & 3 & 4 \end{array} \right]$ then 
$$ A= F^T F = \left[
                  \begin{array}{cccc}
                   1 & 2 & 3 & 4 \\
                   2 & 4 & 6 & 8 \\
                   3 & 6 & 9 & 12 \\
                   4 & 8 & 12 & 16
                  \end{array}
                  \right] \; $$ 
is selfadjoint with $(\Det(A))^2 =30^2 = 900$. Since $k=1$, the right hand side of
Pythagoras is the sum of the squares of the matrix entries. This is 
also $900$. This can be generalized to any row vector $F$ for which the identity reduces
to the obvious identity $(\sum_{i=1}^n F_i^2)^2 = (\sum_{i,j=1}^n (F_i F_j)^2)$
which tells that the Euclidean norm of $F$ is the Hilbert-Schmidt norm of $A=F^T F$. 

\section{A generalization} 

We have the Hilbert-Schmidt identity
$$ p_1= \tr(F^T G) = \sum_{|P|=1} \det(F_P) \det(G_P) = \sum_{i,j} F_{ij} G_{ij}   \; , $$
where $F_P$ is the sub-matrix matched by the pattern $P$ and 
$|P|=k$ means that $P$ is a $k \times k$ matrix and where
\begin{equation}
 p(x) = p_0 (-x)^m + p_1 (-x)^{m-1} + \cdots + p_k (-x)^{m-k} + \cdots + p_m   
 \label{charpol1} 
\end{equation}
is the characteristic polynomial of the $m \times m$ matrix $F^T G$ with $p_0=1$.
Having seen an identity holds for $k=1$ (and see that it is the classical Cauchy-Binet
for $k=m$) we can try for general $0 \leq k \leq m$. 
Experiments prompted to generalize the result. 
Theorem~\ref{Theorem1}) is the special case of the following, when $k$
is the minimal $k$ for which $p_k$ is not zero: 

\begin{thm}[Generalized Cauchy-Binet]
If $F,G$ are arbitrary $n \times m$ matrices and $0 \leq k$, then 
$$ p_k = \sum_{|P|=k} \det(F_P) \det(G_P)  \; , $$
where the sum is over $k$-minors and where $p_k$ are the coefficients
of the characteristic polynomial (\ref{charpol1}) of $F^T G$. This implies
the polynomial identity
$$ \det(1+z F^T G) = \sum_P z^{|P|} \det(F_P) \det(G_P)  \;  $$
in which the sum is over all minors $A_P$ including the empty one $|P|=0$ for which 
$\det(F_P) \det(G_P)=1$.
\label{Theorem2}
\end{thm}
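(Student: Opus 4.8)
The plan is to prove the polynomial identity $\det(1 + z\, F^T G) = \sum_P z^{|P|} \det(F_P)\det(G_P)$ via multilinear algebra, and then extract $p_k = \sum_{|P|=k}\det(F_P)\det(G_P)$ by comparing coefficients of $z^k$. I would start from the classical exterior-algebra fact already invoked in the proof of Proposition~\ref{Proposition}) part 9), namely $\det(1+zA) = \sum_{j=0}^m z^j\, \tr(\Lambda^j A)$ applied to $A = F^T G$, so that $p_k = \tr(\Lambda^k(F^T G))$. The identity then reduces to the claim $\tr(\Lambda^k(F^T G)) = \sum_{|P|=k}\det(F_P)\det(G_P)$, with the degenerate convention that $\Lambda^0$ of any matrix is the $1\times 1$ identity, giving the $|P|=0$ term the value $1$.

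Next I would use functoriality of the exterior power: $\Lambda^k(F^T G) = \Lambda^k(F^T)\,\Lambda^k(G) = (\Lambda^k F)^T\, \Lambda^k G$, where $\Lambda^k F \colon \Lambda^k \R^m \to \Lambda^k \R^n$ is the induced map between exterior powers of the domain and codomain. Writing everything in the standard bases $\{e_I\}_{|I|=k}$ of $\Lambda^k\R^m$ and $\{e_J\}_{|J|=k}$ of $\Lambda^k\R^n$, the matrix entries of $\Lambda^k F$ are exactly the $k\times k$ minors of $F$: $(\Lambda^k F)_{J,I} = \det(F_{J,I})$ where $F_{J,I}$ is the submatrix of $F$ with row set $J$ and column set $I$. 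Taking the trace of the composite $(\Lambda^k F)^T\,\Lambda^k G$ then gives
$$
\tr\big((\Lambda^k F)^T\,\Lambda^k G\big) = \sum_{|I|=k}\big((\Lambda^k F)^T\,\Lambda^k G\big)_{I,I} = \sum_{|I|=k}\sum_{|J|=k}\det(F_{J,I})\det(G_{J,I}),
$$
and identifying the pair $(J,I)$ of a row-set $J$ and column-set $I$ with a $k\times k$ submatrix mask $P$ of $F$ (equivalently of $G$, since $F,G$ have the same shape), the double sum is precisely $\sum_{|P|=k}\det(F_P)\det(G_P)$. Summing over $k$ with the weight $z^k$ yields the polynomial identity, and the case $k=0$ is handled by the convention $\Lambda^0 = 1$.

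I would then note that Theorem~\ref{Theorem1}) is recovered by choosing $k$ to be the smallest index with $p_k \neq 0$: for that $k$, $p_k = \Det(F^T G)$ by part 8) of Proposition~\ref{Proposition}), and all lower-order terms vanish, so $\Det(F^T G) = \sum_{|P|=k}\det(F_P)\det(G_P)$. For $z=1$ one also obtains $\det(1+F^T G) = \sum_P \det(F_P)\det(G_P)$; setting $G = F$ gives the Pythagorean form $\det(1+F^T F) = \sum_P \det^2(F_P)$. The main obstacle I anticipate is purely bookkeeping rather than conceptual: one must be careful that $\Lambda^k F$ is the map on exterior powers of the \emph{source} space $\R^m$ into the exterior power of the \emph{target} $\R^n$ (so its matrix is rectangular, $\binom{n}{k}\times\binom{m}{k}$), and that $(\Lambda^k F)^T = \Lambda^k(F^T)$ under the standard identification of $\Lambda^k$ of a space with its dual — the sign conventions in identifying minors of $F^T$ with minors of $F$ must be checked to confirm no stray signs appear. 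Once the functoriality $\Lambda^k(AB) = (\Lambda^k A)(\Lambda^k B)$ and the entry formula $(\Lambda^k F)_{J,I} = \det(F_{J,I})$ are pinned down (both standard, e.g. via the Cauchy-Binet formula for ordinary determinants applied blockwise, or by direct computation on wedge products of basis vectors), the identity falls out immediately and uniformly, including all the degenerate cases.
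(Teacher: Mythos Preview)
Your proposal is correct and follows essentially the same route as the paper: both arguments reduce to $p_k = \tr(\Lambda^k(F^TG))$, invoke functoriality $\Lambda^k(F^TG) = (\Lambda^k F)^T\Lambda^k G$ (the paper's Lemma~\ref{multiplication}), and use that the matrix entries of $\Lambda^k F$ are the $k\times k$ minors of $F$ (the paper's Lemma~\ref{multilinearlemma}) to identify the trace of the product as $\sum_{|P|=k}\det(F_P)\det(G_P)$. Your anticipated bookkeeping concern about $(\Lambda^k F)^T = \Lambda^k(F^T)$ and signs is exactly the step the paper handles by writing $F^T_{JI} = F_{IJ}$ in its chain of equalities.
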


We will prove this in the next section. The second statement follows
then from the fact that for $z=(-x)^{-1}$ and any $m \times m$ matrix $F^TG$, then 
$$ \det(F^T G-x) = \sum_P (-x)^{m-|P|} \det(F_P) \det(G_P)  \; . $$
Note again that the polynomial identity in the theorem holds also for $z=0$, 
where both sides are $1$ as $|P|=0$ contributes $1$ to the sum. \\

In the next corollary, we again sum over all minors, including the 
empty set $P$ for which $\det(F_P)=1$:

\begin{coro}
If $F,G$ are two general $n \times m$ matrices, then 
$$ \det(1+F^T G) = \sum_P \det(F_P) \det(G_P)  \; ,  $$
where the right hand side sums over all minors.
\label{determinanttheorem}
\end{coro}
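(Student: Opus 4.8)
The plan is to obtain this as a one-line specialization of Theorem~\ref{Theorem2}). That theorem already records the polynomial identity $\det(1 + z F^T G) = \sum_P z^{|P|}\det(F_P)\det(G_P)$, valid as an identity of polynomials in $z$ for any two $n\times m$ matrices $F,G$, with the convention that the empty pattern $|P|=0$ contributes the constant $1$. Since both sides are genuine polynomials in $z$, the identity holds for every particular value of $z$; substituting $z=1$ yields $\det(1 + F^T G) = \sum_P \det(F_P)\det(G_P)$ with the sum taken over all minors. That is essentially the whole argument, so the only work is to phrase it correctly and confirm the normalization of the $|P|=0$ term.

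If one prefers not to pass through the polynomial form, the same conclusion comes out of the coefficient form $p_k = \sum_{|P|=k}\det(F_P)\det(G_P)$ of Theorem~\ref{Theorem2}) combined with the elementary identity $\det(1+A) = \sum_{j=0}^m \tr(\Lambda^j A) = \sum_{j=0}^m p_j$ for the $m\times m$ matrix $A = F^T G$, where the relation $\tr(\Lambda^j A) = p_j$ was already noted in the proof of part 9 of Proposition~\ref{Proposition}). Summing the identity $p_k = \sum_{|P|=k}\det(F_P)\det(G_P)$ over $k = 0, 1, \dots, m$ and regrouping the resulting double sum under the single label ``all minors $P$'' gives exactly $\det(1 + F^T G) = \sum_P \det(F_P)\det(G_P)$, the $k=0$ term being the empty minor with value $1$ and matching $p_0 = 1$. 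Equivalently, one evaluates the characteristic polynomial $p(x) = \det(F^T G - x)$ of Theorem~\ref{Theorem2}) at $x = -1$: there each factor $(-x)^{m-j}$ equals $1$, and $\det(F^T G + 1) = \det(1 + F^T G)$.

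There is no real obstacle: the statement is a direct corollary, and the single point requiring care is the bookkeeping that the $|P|=0$ contribution is set to $1$, so that the displayed formula is a literal equality rather than an equality modulo the constant term. It is worth flagging the two consequences that make the corollary useful, both of which are again pure specializations: taking $F = G$ gives the Pythagorean identity $\det(1 + F^T F) = \sum_P \det^2(F_P)$ over all minors of $F$; and applying this with $F$ the incidence matrix of a finite simple graph, so that $F^T F = L$ is the scalar Laplacian and each $\det^2(F_P)\in\{0,1\}$ indicates a rooted spanning forest, recovers the Chebotarev--Shamis forest theorem that $\det(1+L)$ is the number of rooted spanning forests in the graph.
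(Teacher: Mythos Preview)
Your proof is correct and matches the paper's own argument exactly: the paper's proof is the single sentence ``Just evaluate the polynomial identity in Theorem~\ref{Theorem2}) at $z=1$.'' Your additional alternative via summing the coefficient identities $p_k = \sum_{|P|=k}\det(F_P)\det(G_P)$ is also valid but unnecessary, since the polynomial form is already established in Theorem~\ref{Theorem2}.
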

\begin{proof}
Just evaluate the polynomial identity in Theorem~\ref{Theorem2}) at $z=1$.
\end{proof} 

Especially,

\begin{coro}
If $F$ is an arbitrary $n \times m$ matrix, then 
$$ \det(1+F^T F) = \sum_P \det^2(F_P) \;  $$
where the right hand side sums over all minors.
\label{determinantcorollary}
\end{coro}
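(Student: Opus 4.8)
The plan is to read this off as the diagonal case $G=F$ of Corollary~\ref{determinanttheorem}. That corollary states the identity $\det(1+F^T G)=\sum_P\det(F_P)\det(G_P)$ for \emph{every} pair of $n\times m$ matrices, so it is legitimate to substitute the two matrices equal; doing so collapses each summand $\det(F_P)\det(G_P)$ to $\det^2(F_P)$ and the left side to $\det(1+F^T F)$. Thus there is nothing substantial to do once Theorem~\ref{Theorem2} (hence Corollary~\ref{determinanttheorem}) is available, and no real obstacle arises at this step: the difficulty, if any, lives entirely in Theorem~\ref{Theorem2}.

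The only things worth making explicit here are bookkeeping. The empty pattern $|P|=0$ is still counted on the right, contributing the conventional value $1$, which matches the constant term $p_0=1$ of the characteristic polynomial of $F^T F$ in the notation of~(\ref{charpol1}). Comparing coefficients of $z^k$ in the polynomial form of Theorem~\ref{Theorem2} shows the statement is equivalent to $p_k=\sum_{|P|=k}\det^2(F_P)$ for each $k$; for real $F$, where $F^T F$ is positive semidefinite, this incidentally re-proves $p_k\ge 0$ and hence $\det(1+F^T F)=\prod_j(1+\lambda_j)\ge 1>0$, with $\lambda_j\ge 0$ the eigenvalues of $F^T F$.

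Since the derivation itself is immediate, I would spend the remaining space on the combinatorial consequence motivating the identity. When $F$ is the oriented incidence matrix of a finite simple graph $(V,E)$ with $|V|=m$, the matrix $F^T F$ is the scalar Laplacian $L$ and $F$ is totally unimodular, so $\det(F_P)\in\{-1,0,1\}$ and $\det^2(F_P)\in\{0,1\}$. A pattern $P$ selects $k$ edges $S$ and $k$ vertices $W$, and one checks via the standard forest criterion that $\det^2(F_P)=1$ precisely when $S$ is a forest and the $m-k$ unchosen vertices $V\setminus W$ meet each tree of $(V,S)$ in exactly one vertex, i.e. $P$ encodes a spanning forest together with a choice of one root in each of its trees. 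Hence $\det(1+L)=\sum_P\det^2(F_P)$ equals the number of rooted spanning forests of the graph, which is the Chebotarev--Shamis forest theorem, and the connected, $k=m-1$ slice recovers Corollary~\ref{Kirchhoff}. The one genuinely nontrivial input in this last paragraph, needed for the graph-theoretic reading rather than for the corollary itself, is precisely that characterization of which $k\times k$ minors of an incidence matrix vanish.
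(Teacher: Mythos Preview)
Your proposal is correct and matches the paper's approach exactly: the paper presents this corollary with the single word ``Especially,'' indicating it is simply the specialization $G=F$ of Corollary~\ref{determinanttheorem}. Your additional remarks on the nonnegativity of the $p_k$ and the Chebotarev--Shamis application are fine elaborations but go beyond what the paper does at this point (the forest theorem is proved separately as the next corollary).
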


{\bf Examples:} \\
{\bf 1)} If $A=\left[ \begin{array}{cc} a & b \\ c &  d \\ \end{array} \right]$ 
then $\det(1+A^T A) = a^2 d^2+a^2-2 a b c d+b^2 c^2+b^2+c^2+d^2+1$ which is equal to
$1+a^2+b^2+c^2+d^2 + (ad-bc)^2$. \\
{\bf 2)} If $v=[v_1,\dots,v_n]$ is a $n \times 1$ matrix, then $1+v^T \cdot v = 1+v_k^2$. \\
{\bf 3)} If $v=[a,b]^T$ is a $1 \times 2$ matrix we have  $\det( \left[ \begin{array}{cc} 1+a^2 & ab \\
                                                                                    ab    & 1+b^2  \end{array} \right]
= 1+a^2+b^2$. \\
{\bf 4)} If $A$ is a symmetric $n \times n$ matrix then $\det(1+A^2) = \sum_P \det^2(A_P)$.
This is useful if $A=D=(d+d^*)$ is the Dirac matrix of a graph and $A^2=L$ is the form Laplacian.
Since $A_P$ can have several nonzero permutation patterns, the interpretation using rooted
spanning trees is a bit more evolved, using double covers of the simplex graph. \\
{\bf 5)} If $F$ is the incidence matrix for a graph so that $F^T F$ is the scalar Laplacian
then $\det(F_P)^2 \in \{1,0 \}$ and choosing the same number of edges and vertices in such a way 
that every edge connects with exactly one vertex and so that we do not form loops. These are rooted forests, 
collections of rooted trees. Trees with one single vertex are seeds which when included, 
lead to rooted spanning forests. 

\begin{coro}[Chebotarev-Shamis]
$\det(1+L)$ is the number of rooted spanning 
forests in a connected finite simple graph $G$ with scalar Laplacian $L$. 
\end{coro}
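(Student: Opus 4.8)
The plan is to derive this directly from Corollary~\ref{determinantcorollary}) applied to the incidence matrix $F$ of the graph $G$. Recall that if $G=(V,E)$ has $|V|=m$ vertices and $|E|=n$ edges, then the (oriented) incidence matrix $F$ is the $n\times m$ matrix with $Ff((a,b))=f(b)-f(a)$, and $L=F^TF$ is the scalar Laplacian, which is independent of the chosen orientation. Corollary~\ref{determinantcorollary}) then gives $\det(1+L)=\det(1+F^TF)=\sum_P\det^2(F_P)$, where the sum is over all square sub-matrix masks $P$ of $F$ of every size $k$ with $0\le k\le m$, including the empty mask (which contributes $1$). So the entire content of the theorem is the combinatorial identification of $\sum_P\det^2(F_P)$ with the number of rooted spanning forests.

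The key step is therefore the following combinatorial claim: for a mask $P$ selecting $k$ edges $\{e_1,\dots,e_k\}$ and $k$ vertices $\{q_1,\dots,q_k\}$, the square $\det^2(F_P)$ equals $1$ if the chosen edges form a spanning forest of the subgraph on those edges in which each tree component contains exactly one of the chosen vertices $q_i$ (a ``rooted forest on $k$ edges''), and $\det^2(F_P)=0$ otherwise. This is the same local analysis already used in the proof of Corollary~\ref{Kirchhoff}): each entry of a column of $F$ indexed by edge $e=(a,b)$ has a single $+1$ and a single $-1$, so a $k\times k$ submatrix with a nonzero term in its determinant expansion must assign distinct rows (i.e.\ distinct chosen vertices) to distinct columns (chosen edges) with each chosen edge incident to its assigned vertex; the standard argument (incidence matrices of forests are totally unimodular, and a $k\times k$ minor of the incidence matrix of a graph is nonzero iff the $k$ edges form a forest meeting the $k$ vertex rows in a system of distinct representatives) shows $\det(F_P)\in\{-1,0,1\}$ and pins down exactly when it is $\pm1$. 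Summing $\det^2(F_P)=[\,P \text{ is a rooted forest}\,]$ over all $k$ and all masks $P$ of size $k$ counts precisely the rooted spanning forests of $G$: a rooted spanning forest is a spanning subgraph which is a disjoint union of trees, each tree carrying one marked root vertex, and each such object is recovered uniquely as the edge set of some tree component structure together with the choice of roots, with the unused (seed) vertices corresponding to singleton trees that simply are not selected among the $k$ rows. The case $k=0$ gives the all-seeds forest, accounting for the $+1$ term.

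I expect the main obstacle to be purely bookkeeping: making the correspondence between masks $P=(I,J)$ (with $I$ the selected rows/vertices and $J$ the selected columns/edges) and rooted spanning forests genuinely bijective, in particular being careful that the row set $I$ of size $k$ really does supply exactly one root per component of the forest spanned by the $k$ edges in $J$, and that isolated vertices not in $I$ are correctly interpreted as single-vertex rooted trees (seeds) in the spanning forest. Once that dictionary is fixed, the identity $\det^2(F_P)=1$ on exactly those masks is immediate from total unimodularity of forest incidence matrices, and the theorem follows by summing. (Connectedness of $G$ is not actually needed for the counting statement, though it is what makes $\det(1+L)$ a natural ``complexity'' refinement of $\Det(L)/m$.)
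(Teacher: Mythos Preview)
Your overall strategy matches the paper's: apply the identity $\det(1+F^TF)=\sum_P\det^2(F_P)$ to the incidence matrix $F$, use that $\det(F_P)\in\{-1,0,1\}$ by total unimodularity, and then set up a bijection between masks $P$ with $\det^2(F_P)=1$ and rooted spanning forests. That outline is correct.

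However, your combinatorial characterization of the masks with $\det^2(F_P)=1$ is stated the wrong way around. You claim that a size-$k$ mask (choosing $k$ edges and $k$ vertices) has $\det(F_P)=\pm1$ exactly when the chosen edges form a forest in which ``each tree component contains exactly one of the chosen vertices $q_i$'', and later that ``the row set $I$ of size $k$ \dots\ does supply exactly one root per component''. But a spanning forest with $k$ edges on $m$ vertices has $m-k$ components, not $k$, so the $k$ selected vertices cannot be one root per tree. The correct condition is dual to yours: the $k$ chosen edges must form a forest, every chosen vertex must be an endpoint of some chosen edge, and each tree of that forest must omit exactly one \emph{unchosen} vertex. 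In the bijection the $k$ chosen vertices are the \emph{non-root} vertices, while the $m-k$ unchosen vertices are the roots (one for each tree, the isolated ones being the seeds). A concrete counterexample to your version: on the path $a$--$b$--$c$ take $k=2$, choose both edges and the vertices $\{a,b\}$. There is a single tree containing two chosen vertices, so your condition fails, yet
\[
\det(F_P)=\det\begin{pmatrix}-1&1\\0&-1\end{pmatrix}=1.
\]
Here the lone unchosen vertex $c$ is the root.

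Once you flip the role of chosen versus unchosen vertices, the rest of your argument goes through unchanged: a rooted spanning forest with $k$ edges has $m-k$ roots and $k$ non-roots, and sending it to the mask (its edge set, its non-root set) is the desired bijection. One further minor slip: with $F$ taken as an $n\times m$ matrix as both you and the paper do, rows index edges and columns index vertices, so your parenthetical ``$I$ the selected rows/vertices and $J$ the selected columns/edges'' has the two labels interchanged.
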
 
\begin{proof}
Again, we assume that the graph $(V,E)$ has $|V|=m$ vertices and $|E|=n$ edges so that
the incidence matrix $F$ is a $n \times m$ matrix. The condition $m \leq n$ follows from the
connectedness. The Laplacian $L$ is of the form $L=F^T F$ where $F$ is gradient defined by 
Poincar\'e and $F^T$ the divergence. Corollary~\ref{determinantcorollary}) equates $\det(1+L)$ with 
the sum over all possible determinants $\det^2(F_P) \in \{0,1\}$. Now if $|P|=m$, then $\det(F_P)=0$
as $L$ has a kernel. If $|P|=m-1$ we count all rooted spanning trees. For $|P|=n-2$ we count all 
cycle free subgraphs with $n-2$ edges. These are rooted spanning forests as individual trees are no
more necessarily connected and even can be seeds which are graphs without edges. Since a forest with 
$n-k$ edges has Euler characteristic $|V|-|E| = n-(n-k) = k = b_0-b_1 = b_0$ by Euler Poincar\'e, where
$b_0$ is the number of connectivity components, we see that a spanning forest with $n-k$ edges has
$k$ base points. These are the roots of the individual trees. The sum also includes the case $k=n$,
where all trees are seeds and which corresponds to the case $|P|=0$ in the sum. 
\end{proof} 

See \cite{ChebotarevShamis2,ChebotarevShamis1} for the discovery and 
\cite{Knillforest}, for more details and examples. 

\section{Proof}

While the proof of Theorem~\ref{Theorem2}) 
uses standard multi-linear algebra, we need to fix some notation. \\

Recall that $\Lambda^k F$ is the $k$'th exterior power of a $n \times m$ matrix 
$F$. It is a linear transformation from the vector space $\Lambda^k \R^m$ to the
vector space $\Lambda^k \R^n$. The linear transformation $\Lambda^k F$ can be written as a
$\left( \begin{array}{c} n \\ k \end{array} \right) \times \left( \begin{array}{c} m \\ k \end{array} \right)$
matrix if a basis is given in $\R^m$ and $\R^n$. If $k=0$, then $\lambda^0 F$ is just the scalar $1$.  \\

While the matrix entries $F_{ij}$ are defined for $1 \leq i \leq n, 1 \leq j \leq m$,
the indices of $\Lambda^k F$ are given by subsets $I$ of $\{1, \dots, n \; \}$
and subsets $J$ of $\{1, \dots, m \; \}$ which both have cardinality $k$. 
If $e_j$ are basis elements in $\R^m$ and $f_j$ are
basis elements in $\R^n$, then $e_I = e_{i_1} \wedge  \cdots \wedge e_{i_k}$
and $f_J = f_{i_1} \wedge  \cdots \wedge f_{i_k}$ are basis vectors in $\Lambda^k \R^m$
and $\Lambda^k \R^n$. Write $\Lambda^k F e_I$ for $F e_{i_1} \wedge \cdots \wedge F e_{i_k}$.
It is possible to  simplify notation and just write 
$F_{IJ}$ for the matrix entry of $(\Lambda^k F)_{IJ}$. It is a real number given by 
$F e_I \cdot f_J$. Define also $F_P=F_{P(IJ)}$ for the matrix with pattern $P=P(IJ)$ defined 
by the sets $I \subset \{1, \dots, n \; \}$ and $J \subset \{1, \dots, m \; \}$. 
Finally, in the case of a square matrix $A$,
write $\Tr(A) = \sum_{K} A_{KK}$ when summing over all subsets $K$.  \\

The following lemma tells that $F_{IJ}$ is a minor

\begin{lemma}[Minor]
If $\Lambda^k \R^n$ and $\Lambda^k \R^m$ are equipped with the standard basis 
$e_I = e_{i_1} \wedge \cdots \wedge e_{i_k}$ obtained from a basis $e_i$, then 
$$   F_{IJ} = \det(F_{P(IJ)}) \;  $$
for any sets $I,J$ of the same cardinality $|I|=|J|=k$. 
\label{multilinearlemma}
\end{lemma}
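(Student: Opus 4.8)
The plan is to unwind the definition of the matrix entry $F_{IJ}=(\Lambda^k F)_{IJ}$ and recognize it as the standard Laplace-type expansion of a $k\times k$ minor. First I would fix $I=\{i_1<\cdots<i_k\}\subset\{1,\dots,n\}$ and $J=\{j_1<\cdots<j_k\}\subset\{1,\dots,m\}$, and write the columns $Fe_{j_r}=\sum_{s=1}^n F_{s,j_r}\, f_s$ in terms of the standard basis $f_s$ of $\R^n$. Then $\Lambda^k F\, e_J = Fe_{j_1}\wedge\cdots\wedge Fe_{j_k}$ is, by multilinearity of the wedge product, the sum over all tuples $(s_1,\dots,s_k)$ of $F_{s_1,j_1}\cdots F_{s_k,j_k}\, f_{s_1}\wedge\cdots\wedge f_{s_k}$. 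Terms with a repeated index vanish, and collecting the rest by the underlying set $\{s_1,\dots,s_k\}$ — say this set is $I'$, listed in increasing order — reorders each wedge $f_{s_1}\wedge\cdots\wedge f_{s_k}$ into $\pm f_{I'}$, the sign being that of the permutation sending the increasing listing of $I'$ to $(s_1,\dots,s_k)$.

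Next I would read off the coefficient of $f_I$, i.e. take the inner product with $f_I$: only the tuples $(s_1,\dots,s_k)$ that are a permutation $\sigma$ of the increasing listing $(i_1,\dots,i_k)$ contribute, and the coefficient becomes $\sum_{\sigma\in S_k}\sign(\sigma)\, F_{i_{\sigma(1)},j_1}\cdots F_{i_{\sigma(k)},j_k}$. This is exactly the Leibniz formula for $\det$ of the $k\times k$ matrix whose $(r,s)$ entry is $F_{i_r,j_s}$, which by definition is $F_{P(IJ)}$, the submatrix of $F$ obtained by keeping the rows in $I$ and the columns in $J$. Hence $F_{IJ}=\det(F_{P(IJ)})$. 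The case $k=0$ is handled separately and trivially: both sides are the scalar $1$ by the conventions already fixed in the paper ($\Lambda^0 F = 1$, and the determinant of the empty matrix is $1$).

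The only genuine bookkeeping obstacle is the sign: one must check that the sign produced by reordering $f_{s_1}\wedge\cdots\wedge f_{s_k}$ into the standard basis vector $f_I$ is precisely $\sign(\sigma)$ for the permutation $\sigma$ with $(s_1,\dots,s_k)=(i_{\sigma(1)},\dots,i_{\sigma(k)})$, so that it matches the sign appearing in the Leibniz expansion. This is a standard fact about the exterior algebra (the multilinear map $e_{j_1}\otimes\cdots\otimes e_{j_k}\mapsto e_{j_1}\wedge\cdots\wedge e_{j_k}$ is alternating), and I would simply invoke it rather than re-deriving it. With the sign settled, the identity $F_{IJ}=\det(F_{P(IJ)})$ follows immediately, and this in turn justifies writing $\det(F_P)$ for the entries $F_{IJ}$ throughout the proof of Theorem~\ref{Theorem2}).
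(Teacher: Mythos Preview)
Your proposal is correct and follows exactly the approach the paper takes: unwind the definition of $(\Lambda^k F)_{IJ}=\langle \Lambda^k F\, e_J,\, f_I\rangle$ and recognize the Leibniz expansion of the minor. The paper's own proof is a one-line ``this is just rewriting $F_{IJ}=\langle F e_I,f_J\rangle$ using the definition,'' so your version is simply a more explicit rendering of the same computation, including the sign bookkeeping the paper leaves implicit.
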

\begin{proof}
This is just rewriting $F_{IJ} = \langle F e_I, f_J \rangle$
using the basis vectors $e_I = e_{i_1} \wedge \cdots \wedge e_{i_k}$
and $f_J = f_{j_1} \wedge \cdots \wedge f_{j_k}$ 
and the definition of $\Lambda^k F: \Lambda^k \R^m \to \Lambda^k \R^n$. 
\end{proof} 

Lemma~\ref{multilinearlemma}) implies the trace identity:

\begin{coro}[Trace]
For any $n \times n$ square matrix $A$ and $0 \leq k \leq n$ we have 
$$ \tr(\Lambda^k A) = \sum_{P} \det(A_P) \; ,  $$
where the sum is over all $k \times k$ sub matrices $A_P$.
\label{trace}
\end{coro}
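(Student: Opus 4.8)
The plan is to read the corollary off directly from Lemma~\ref{multilinearlemma}) and the definition of the matrix trace, with essentially no extra work. First I would recall that the square matrix $\Lambda^k A$ is indexed by the $k$-element subsets of $\{1,\dots,n\}$, so its ordinary trace is the sum of its diagonal entries, $\tr(\Lambda^k A) = \sum_{|K|=k} (\Lambda^k A)_{KK}$, the index $K$ ranging over all subsets of cardinality $k$. This is just the statement $\tr(\Lambda^k A) = \Tr(\Lambda^k A)$ with the capital-$\Tr$ notation fixed above.

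Next, for each fixed $K$ I would invoke Lemma~\ref{multilinearlemma}) with $F=A$ and $I=J=K$. Since $\Lambda^k\R^n$ is equipped with the standard basis $e_K = e_{k_1}\wedge\cdots\wedge e_{k_k}$, the lemma gives $(\Lambda^k A)_{KK} = A_{KK} = \det(A_{P(KK)})$; that is, the diagonal entry of $\Lambda^k A$ labelled by $K$ is exactly the $k\times k$ minor of $A$ obtained by keeping the rows and columns indexed by $K$. Summing this over all $K$ with $|K|=k$ yields $\tr(\Lambda^k A) = \sum_{|K|=k}\det(A_{P(KK)}) = \sum_P\det(A_P)$, which is the assertion, the sum running over the $k\times k$ submatrices $A_P = A_{P(KK)}$. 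I would also record the boundary cases to confirm the conventions behave: for $k=0$ one has $\Lambda^0 A = [1]$, so $\tr(\Lambda^0 A)=1$ and the right-hand side is the single empty pattern with $\det(A_\emptyset)=1$; for $k=n$ there is one subset $K=\{1,\dots,n\}$ and the identity reduces to the familiar $\tr(\Lambda^n A)=\det(A)$.

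There is no serious obstacle here: the content has already been packed into Lemma~\ref{multilinearlemma}), and the multilinear description of $\Lambda^k A$ is what does the work. The only point that needs care is bookkeeping, namely that ``all $k\times k$ submatrices'' in the statement must be understood as those cut out by a $k\times k$ pattern $P(KK)$ with equal row and column index sets (the principal ones), since the trace sees only the diagonal entries of $\Lambda^k A$; the version with independent row and column index sets is recovered later, in the proof of Theorem~\ref{Theorem2}), after applying the classical Cauchy--Binet identity to each such principal block of $F^T G$.
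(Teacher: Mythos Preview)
Your proof is correct and follows exactly the paper's approach: the paper simply states that the corollary is implied by Lemma~\ref{multilinearlemma}) and then remarks that the sum is over diagonal minors $A_{P(II)}$ together with the $k=0$ convention, which is precisely the argument you spell out. Your final parenthetical about Theorem~\ref{Theorem2}) is slightly imprecise (the paper uses the multiplication Lemma~\ref{multiplication}) rather than applying classical Cauchy--Binet to principal blocks), but this does not affect the proof of the corollary itself.
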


Note that unlike in Cauchy-Binet, we have just summed over 
diagonal minors $A_{P(II)}$. In the case $k=0$, we understand the left hand side 
to be the $1 \times 1$ matrix $\Lambda^0 A=1$ and the right hand side
to sum over the empty pattern $P$ which is $1$ as we postulated 
$\det(F_P)=1$ for $|P|=0$. 


\begin{lemma}[Multiplication]
For $n \times m$ matrix $F$ and an $m \times n$ matrix $G$ 
$$  (F G)_{IJ} = \sum_K F_{IK} G_{KJ}  \;  $$
where $I,J \subset \{1, \dots n\}$ of cardinality $k$ 
and where the sum is over all $K \subset \{1, \dots, m\}$ of cardinality $k$. 
\label{multiplication}
\end{lemma}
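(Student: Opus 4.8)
The statement is the Cauchy–Binet–type formula for the matrix entries of $\Lambda^k(FG)$ in terms of those of $\Lambda^k F$ and $\Lambda^k G$. The plan is to deduce it from functoriality of the exterior power together with Lemma~\ref{multilinearlemma}). The key observation is that $\Lambda^k$ is a functor: since $\Lambda^k F: \Lambda^k \R^m \to \Lambda^k \R^n$ is defined by $\Lambda^k F (e_{i_1} \wedge \cdots \wedge e_{i_k}) = Fe_{i_1} \wedge \cdots \wedge Fe_{i_k}$, and similarly for $G$, one checks directly on decomposable vectors $e_K = e_{k_1} \wedge \cdots \wedge e_{k_k}$ that
\begin{equation}
  \Lambda^k(FG)(e_K) = FGe_{k_1} \wedge \cdots \wedge FGe_{k_k} = (\Lambda^k F)\bigl((\Lambda^k G)(e_K)\bigr),
\end{equation}
so $\Lambda^k(FG) = (\Lambda^k F)(\Lambda^k G)$ as linear maps $\Lambda^k \R^n \to \Lambda^k \R^n$ (here $G$ is $m\times n$, so $\Lambda^k G: \Lambda^k\R^n \to \Lambda^k\R^m$ and $\Lambda^k F: \Lambda^k\R^m \to \Lambda^k\R^n$). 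Since decomposable vectors span $\Lambda^k\R^n$, linearity extends the identity to all of $\Lambda^k\R^n$.

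The second ingredient is that matrix multiplication of the representing matrices corresponds to composition of linear maps, once we fix the standard bases $\{e_I\}$ and $\{f_J\}$ on the exterior powers. Taking the $(I,J)$ matrix entry of $\Lambda^k(FG)$ with respect to these bases and using the composition law gives
\begin{equation}
  (\Lambda^k(FG))_{IJ} = \sum_K (\Lambda^k F)_{IK} (\Lambda^k G)_{KJ},
\end{equation}
where $K$ ranges over the $k$-element subsets of $\{1,\dots,m\}$ indexing the standard basis of $\Lambda^k\R^m$. Rewriting $(\Lambda^k(FG))_{IJ} = (FG)_{IJ}$, $(\Lambda^k F)_{IK} = F_{IK}$, and $(\Lambda^k G)_{KJ} = G_{KJ}$ in the paper's abbreviated notation yields exactly the claimed formula. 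One should note the degenerate case $k=0$, where $\Lambda^0$ of every matrix is the scalar $1$ and the identity reads $1 = 1\cdot 1$; the orientation/sign conventions are the same on both sides because they are inherited from the single fixed ordering of basis vectors, so no stray signs appear.

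The only genuine point requiring care — and the step I would flag as the main obstacle — is bookkeeping: verifying that the standard ordered bases $e_I$ (with $i_1 < \cdots < i_k$) are used consistently on all three exterior powers, so that "matrix entry with respect to the basis" really does compose by ordinary matrix multiplication without introducing permutation signs. Concretely, one writes $(\Lambda^k G)(e_J) = \sum_K G_{KJ}\, e_K$ with $K$ ranging over ordered $k$-subsets of $\{1,\dots,m\}$, applies $\Lambda^k F$, expands $(\Lambda^k F)(e_K) = \sum_I F_{IK}\, f_I$, and swaps the order of summation. This is purely formal linear algebra and no subtlety beyond careful indexing arises. Once established, the lemma feeds directly into the proof of Theorem~\ref{Theorem2}) via $\Tr(\Lambda^k(F^T G)) = \sum_I (\Lambda^k(F^T G))_{II} = \sum_{I,K} (F^T)_{IK} G_{KI} = \sum_K \det(F_{P(KI)})\det(G_{P(KI)})$ together with Proposition~\ref{Proposition}) part 9).
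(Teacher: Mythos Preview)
Your proof is correct and follows essentially the same approach as the paper: both arguments reduce the lemma to the functoriality $\Lambda^k(FG) = (\Lambda^k F)(\Lambda^k G)$ of the exterior power together with the fact that composition of linear maps corresponds to ordinary matrix multiplication in the standard bases $\{e_I\}$. The paper's own proof is just a brief pointer to this same multilinear fact (citing Gohberg--Lancaster--Rodman and Shafarevich), so your write-up is simply a more explicit version of the same argument.
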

\begin{proof}
This lemma appears as Theorem~A.2.1) in \cite{GohbergLancasterRodman}. 
As pointed out in \cite{Shafarevich} Lemma~10.16), it rephrases the 
classical matrix multiplication of the two exterior products
$\Lambda^k A, \Lambda^k B$ as a composition of maps 
$\Lambda^k \R^n \to \Lambda^k \R^m \to \Lambda^k \R^n$.
For more on the exterior algebra, see \cite{AMR,Greub}.
\end{proof} 

We can now prove Theorem~\ref{Theorem2}):

\begin{proof} 
We use $p_k(A) = \tr(\Lambda^k A)$ for any square matrix $A$, Lemma~\ref{multilinearlemma})
and Lemma~\ref{multiplication}): 
\begin{eqnarray*}
        p_k(F^TG) &=& \tr(\Lambda^k (F^T G) )  = \tr( \Lambda^k F^T \Lambda^k G)  \\
                  &=& \sum_J ( \sum_I F^T_{JI} G_{IJ} ) \\
                  &=& \sum_J \sum_I F_{IJ} G_{IJ} \\
                  &=& \sum_J \sum_I \det(F_{P(IJ)}) \det(G_{P(IJ)})  \\
                  &=& \sum_P \det(F_P) \det(G_P)  \; . 
\end{eqnarray*} 
\end{proof}

Theorem~\ref{Theorem2}) implies 

\begin{coro}[Pythagoras]
For any self-adjoint $n \times n$ matrix $A$ and any $0 \leq k \leq n$ we have 
$$  p_k =  \tr(\Lambda^k A^2) = \sum_{|P|=k} \det^2(A_P) \; ,  $$
where the sum is taken over all minors $A_{P(IJ)}$ with $|I|=k,|J|=k$ and
where $p_k$ is a coefficient of the characteristic polynomial 
$p_0 (-x)^n + p_1 (-x)^{n-1} + \cdots + p_n$ of $A^2$. 
\label{pythagoras2}
\end{coro}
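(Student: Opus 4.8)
The plan is to read this off as the diagonal case $F=G=A$ of Theorem~\ref{Theorem2}), using that a self-adjoint (real symmetric) matrix satisfies $A^T=A$, so that $F^TG=A^TA=A^2$.

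First I would record the ingredient about the left equality. In the proof of part 9) of Proposition~\ref{Proposition}) we used the identity $\det(1+B)=\sum_{j=0}^n \tr(\Lambda^j B)$, which shows that for any square matrix $B$ the coefficient of $(-x)^{n-j}$ in $\det(B-x)$ equals $\tr(\Lambda^j B)$. Applying this to $B=A^2$ gives $p_k=\tr(\Lambda^k A^2)$, where $p_k$ is exactly the coefficient appearing in the characteristic polynomial $p_0(-x)^n+p_1(-x)^{n-1}+\cdots+p_n$ of $A^2$ named in the statement.

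Next I would invoke Theorem~\ref{Theorem2}) with the pair $F=G=A$. Since $A$ is $n\times n$ and self-adjoint, $F^TG=A^TA=A^2$, so the characteristic polynomial of $F^TG$ in Theorem~\ref{Theorem2}) is precisely that of $A^2$ and its coefficients are the same $p_k$. The theorem then yields $p_k=\sum_{|P|=k}\det(F_P)\det(G_P)$ with $P=P(IJ)$ ranging over the $k\times k$ sub-matrix masks determined by subsets $I,J\subset\{1,\dots,n\}$ of cardinality $k$. But with $F=G=A$ we have $F_{P(IJ)}=G_{P(IJ)}=A_{P(IJ)}$, so each summand collapses to $\det^2(A_{P(IJ)})$, giving $p_k=\sum_{|P|=k}\det^2(A_P)$. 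Chaining this with the previous paragraph produces the claimed $p_k=\tr(\Lambda^k A^2)=\sum_{|P|=k}\det^2(A_P)$.

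There is essentially no obstacle here beyond bookkeeping; the corollary is a genuine specialization, not a new argument. The only two points to keep honest are that the collapse of $\det(F_P)\det(G_P)$ to a square relies squarely on $F=G$ (the two masks pick out the \emph{same} minor of $A$), and that the hypothesis ``self-adjoint'' is used solely to rewrite $F^TG$ as $A^2$; over $\C$ with $A=A^*$ one would instead note $A^2=A^*A$ and, since Theorem~\ref{Theorem2}) is proved for $F^TG$, either restrict to the real symmetric case or run the same multilinear computation of the proof of Theorem~\ref{Theorem2}) with $F^T$ replaced by $F^*$. A quick consistency check is Example 4) of the Consequences section: there $A$ has rank $2$, so $k=2$, $p_2=\tr(\Lambda^2 A^2)=\Det(A^2)=\Det^2(A)=1681$, and the nine squared $2\times2$ minors of $A$ indeed sum to $1681$.
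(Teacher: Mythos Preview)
Your proposal is correct and matches the paper's approach exactly: the paper gives no separate proof for this corollary, simply stating that Theorem~\ref{Theorem2}) implies it, and you have spelled out precisely the intended specialization $F=G=A$ (using $A^T=A$ to get $F^TG=A^2$) together with the identification $p_k=\tr(\Lambda^k A^2)$ already used in the proof of Theorem~\ref{Theorem2}). Your added remarks on the complex self-adjoint case and the numerical consistency check are accurate embellishments.
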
 

{\bf Remarks.} \\
{\bf 1)} Despite the simplicity of the proof and similar looking results for minor expansion, formulas in
multi-linear algebra \cite{Konstantopoulos, HuppertWillems}, condensation formulas, trace ideals \cite{SimonTrace},
matrix tree results \cite{Biggs},
formulas for the characteristic polynomial \cite{Lewin,Pennisi,Brooks}, pseudo inverses,
non-commutative generalizations \cite{Caracciolo}, we are not aware that even the
special case of Corollary~\ref{Pythagoras}) for the pseudo determinant
has appeared anywhere already.
In the classical case, where $A$ is invertible, the Pythagorean identity is
also called  Lagrange Identity \cite{HuppertWillems}.\\
{\bf 2)} Pythagoras~(\ref{pythagoras2}) should be compared with the {\bf trace identity} given in 
Lemma~\ref{trace}). The former deals with a product of matrices and is therefore a ``quadratic" identity.
The trace identity deals with one matrix only and does not explain Pythagoras yet. 
For $k=1$, the Cauchy-Binet formula and the trace identity reduce to the definition of the matrix 
multiplication and the definition of the
trace: $\tr(F^T G) = \sum_{i,j} F_{ij} G_{ij}$ and $\tr(A) = \sum_i A_{ii}$. If $k$ is the rank of $A$,
then Cauchy-Binet is Theorem~\ref{Theorem1}) and the trace identity is the
known formula $\Det(A) = \tr(\Lambda^k A)$, where $k$ is the rank of $A$.

\section{Row reduction}

One can try to prove Theorem~\ref{Theorem1}) by simplifying both sides of
$$  \Det(F^T G) = \sum_P \det(F_P) \det(G_P) \; , $$
by applying row operations on $F$ and $G$ and
using that both sides of the identity are basis independent. We will only illustrate this here
as a proof along those line is probably much longer. This section might
have some pedagogical merit however and also traces an earlier proof attempt. 
The strategy of row reduction is traditionally used in the proof of Cauchy-Binet. 
In the special case of the product identity $\det(A B) = \det(A) \det(B)$ 
already, there is a proof which row reduces the $n \times 2n$ matrix $[A|B]$. 
This strategy does not generalize, as any of the three row reduction steps are false
for pseudo-determinants. Lets explain the difficulty: classical row reduction of a matrix $A$ consists of applying 
swap, scale or subtract operations to $A$ to bring a $n \times m$ matrix into row reduced echelon form. 
Exercises in textbooks like \cite{Lay,Bretscher}) ask to prove that the end result ${\rm rref}(A)$ is
independent of the strategy with which these steps are applied.  
When applying row reduction to a nonsingular matrix until the identity 
matrix is obtained, then $\det(A) = (-1)^r/\prod \lambda_j$, where $r$ is the number of 
swap operations used and $\lambda_j$ are the scaling constants which were applied during the 
elimination process. While this is all fine for $\det$, for $\Det$ it is simply false.
Multiplying a zero row with $\lambda$ or swapping two zero rows does not alter
the matrix and does therefore not contribute to the $(-1)^r$ or scaling factors. For example, swapping 
the two rows of 
$$ \left[ \begin{array}{cc} 1 & 1 \\ 0 & 0  \\ \end{array} \right] $$
does not change the pseudo-determinant, nor does a multiplication of the second row by $\lambda$. 
As far as the ``subtraction" part of row reduction, there are more bad news:
unlike for the determinant, subtracting a row from an other
row can change the pseudo-determinant. For example,
$$   \Det( \left[ \begin{array}{cc} 1 & 1 \\ 1 & 1  \\ \end{array} \right] ) = 2,
     \Det( \left[ \begin{array}{cc} 1 & 1 \\ 0 & 0  \\ \end{array} \right] ) = 1 \; . $$
These problems can be overcome partly with a
safe Gauss elimination which honors a ``Pauli exclusion principle": we can do row reduction, as long as we 
do not deal with pairs of parallel vectors. This analogy leads to the use of alternating 
multi-linear forms to get a geometric characterization of the pseudo-determinant. Lets assume for simplicity
that $A$ is a square matrix. In this multi-linear setup, $A$ acts on $k$-forms $f=f_1 \wedge f_2 \wedge \dots \wedge f_k$ as 
$Af = Af_1 \wedge Af_2 \wedge \dots \wedge Af_k$. 
The length of the vector $f=f_1 \wedge \cdots \wedge f_k$ is defined as the $k$-volume $|f|$ of the parallelepiped 
spanned by the $k$ vectors $f_1, \dots,f_k$. We have $|f|=\det(F)$, where $F$ is the matrix 
$F_{ij} = f_i \cdot f_j$ and where $f \cdot g$ denotes the Euclidean dot product. 
Lets call a row in $A$ independent within $A$, if it is not parallel to any other nonzero row.
Safe row reduction consists of three steps $A \to B$ using the following rules:
{\bf a)} if an independent row of $A$ is scaled by a factor $\lambda \neq 0$, then $\Det(B) = \lambda \Det(A)$,
{\bf b)} if two independent rows in $A$ are swapped, then $\Det(B) = -\Det(A)$,
{\bf c)} if an independent row is subtracted from an other independent row, then $\Det(B) = \Det(A)$.
We would like to apply this to the pseudo-determinant of $A=F^T G$: 
lets see what happens if we append a multiple of a given row, starting with the assumption that all of rows are already 
independent. While it is difficult to predict the effect of adding a multiple of a row to an 
other row, it is possible to see when what happens to $A = F^T G$ if such an operation is performed
for $F$ and for $G$.  \\
{\bf i)} Appending a parallel row. \\
Given two $n \times m$ matrices $F,G$ such that $F^T G$ is nonsingular. 
Assume $A^T$ is the $n \times (m+1)$ matrix obtained from $F^T$ by appending 
$\lambda $ times the $l$'th row of $F$ at the end. Assume that $B^T$ is the
$n \times (m+1)$ matrix obtained from $G^T$ by appending $\mu$ times the $l$'th row
of $G^T$ at the end. Then both sides of the Cauchy-Binet formula are multiplied by 
$1+\lambda \mu$. \\
{\bf Proof. } First show that $\det(A^T B) = \Det(A^T B)  = (1+\lambda \mu)  \Det(F^T G)$.
First bring the $m$ row vectors in row reduced echelon form so that we end 
up both for $F$ and $G$ with matrices which are row reduced in the first $m-1$ rows and for which the $m$'th
and $m+1$ th row are parallel. If we now reduce the last two rows,  $F^T G$ is block diagonal with 
$\left[ \begin{array}{cc} 1 & \mu \\ \lambda & \lambda+\mu \end{array} \right]$
at the end which has pseudo-determinant $1 + \lambda \mu$. 
For every pattern $P$ which does not involve the $l$'th row, we have $\det(F_P) = \det(A_P)$
and $\det(G_P) = \det(B_P)$. For every pattern $P$ which does involve the $l$' row and 
not the second last we have $\det(A_P) = \lambda \det(F_P)$ and 
$\det(B_P) = \mu \det(G_P)$. For every pattern which involves the appended last row as well as the
$l$'th row, we have $\det(A_P)=\det(B_P)=0$.  \\
{\bf ii)} Given two $n \times m$ matrices $F,G$ such that $F^T G$ and $F G^T$ have maximal rank. Given $1 \leq l \leq m$. 
Assume the row $v=\sum_{j=1}^l \lambda_j v_j$ is appended to $F^T$ and $w=\sum_{j=1}^l \mu_j w_j$ is appended 
to $G^T$, where $v_j$ are the rows of $F^T$ and $w_j$ are the rows of $G^T$.  Then both sides of the 
Cauchy-Binet formula are multiplied by $1 + \sum_{j=1}^l \lambda_j \mu_j$.  \\
{\bf Proof.} Use induction with respect to $l$, where $l=1$ was case (i). 
When adding a new vector $l \to l+1$, the determinant increases by $\lambda_{l+1} \mu_{j+1}$. 
On the right hand side this is clear by looking at the patterns which involve the last $(m+1)$'th row. \\
When adding more rows we do not have explicit formulas any more.  \\

{\bf 1)} This example hopes to illustrate, how difficult it can be to 
predict what scaling of a row does to the pseudo-determinant.
Lets take the rank-2 matrix
$A= \left[ \begin{array}{ccc} 1 & 2 & 3 \\ 1 & 1 & 1 \\ 2 & 3 & 4 \end{array} \right]$ and scale
successively the first, second or third row by a factor $\lambda = 3$, to get 
$$B= \left[ \begin{array}{ccc} 3 & 2 & 3 \\ 3 & 1 & 1 \\ 6 & 3 & 4 \end{array} \right], 
  C= \left[ \begin{array}{ccc} 1 & 6 & 3 \\ 1 & 3 & 1 \\ 2 & 9 & 4 \end{array} \right],
  D= \left[ \begin{array}{ccc} 1 & 2 & 9 \\ 1 & 1 & 3 \\ 2 & 3 & 12 \end{array} \right] \; .  $$
While $\det(A) = -2$ we have $\det(B)=-8, \det(C)=-2$ and $\det(D) =-4$.
A look at the eigenvalues $\sigma(A) = \{6.31662, -0.316625, 0\}$,
$\sigma(B) = \{8.89898, -0.898979, 0\}$, $\sigma(C) = \{8.24264, -0.242641, 0\}$ and
$\sigma(D) = \{14.2801, -0.28011, 0\}$ confirms how scaling of rows with the same factor
can be tossed around the spectrum all over the place. In this $3 \times 3$ example,
we can visualize the situation geometrically. The pseudo-determinant can be interpreted as the 
area of parallelogram in the image plane. Scaling rows deforms the triangle in 
different directions and it depends on the location of the triangle, how the area is scaled. \\

{\bf 2)}  The two  $3 \times 2$ matrices
$F=\left[ \begin{array}{cc} 1 & 1 \\
                            0 & 3 \\
                            2 & 1 \end{array} \right]$,
$G= \left[ \begin{array}{cc} 1 & 2 \\
                             1 & 1 \\
                             1 & 2 \end{array} \right]$ both have rank $2$.
If we append  $3$ times the first row to $F^T$ and 2 times
the first row to $G^T$, then the pseudo-determinant changes by a factor $1+2 \cdot 3 = 7$. 
Indeed, we have $\det(F^T F) = -9$ and $\det(A^T B) = -63$ with
$A=\left[ \begin{array}{ccc} 1 & 1 & 3 \\
                             0 & 3 & 0 \\
                             2 & 1 & 6 \end{array} \right]$ and 
$B=\left[ \begin{array}{ccc} 1 & 2 & 4 \\
                             1 & 1 & 2 \\
                             1 & 2 & 4 \end{array} \right]$. \\ 
If we append $3$ times the first row of $F^T$ to $F^T$ and $2$ times the {\bf second} row of $G^T$ 
to $G^T$ however, then the pseudo-determinant does not change because $1+\sum_i \mu_i \lambda_i=1$. 
Indeed we compute $\det(A^T B) = -9$ with
$A=\left[ \begin{array}{ccc} 1 & 1 & 3 \\
                             0 & 3 & 0 \\
                             2 & 1 & 6 \end{array} \right]$ and 
$B=\left[ \begin{array}{ccc} 1 & 2 & 2 \\
                             1 & 1 & 2 \\
                             1 & 2 & 2 \end{array} \right]$. 

{\bf 3)} This is an example of $4 \times 2$ matrices $F,G$, where the first matrix has 
rank $2$ and the second matrix has rank $1$. 
If $F=\left[ \begin{array}{cc} 1 & 1 \\
                                  2 & 1 \\
                                  3 & 1 \\
                                  4 & 1 \end{array} \right]$,
$G= \left[ \begin{array}{cc} 1 & 1 \\
                             2 & 2 \\
                             3 & 3 \\
                             4 & 4 \end{array} \right]$, 
Then $F^T G= \left[ \begin{array}{cc} 30 & 30  \\ 
                                      10 & 10  \end{array} \right]$
which has pseudo-determinant $40$.
If we multiply the second row of $F^T$ with a factor $2$,
we get 
 $F_1=\left[ \begin{array}{cc} 1 & 2 \\
                               2 & 2 \\
                               3 & 2 \\
                               4 & 2 \end{array} \right]$,
and  $F_1^T G= \left[ \begin{array}{cc} 30 & 30  \\ 
                                        20 & 20  \end{array} \right]$
which has pseudo-determinant $50$.
If we swap two rows of $F^T$, we get 
$F_2=\left[ \begin{array}{cc} 2 & 1 \\
                              2 & 2 \\
                              2 & 3 \\
                              2 & 4 \end{array} \right]$ and 
$F_2^T G = \left[ \begin{array}{cc} 10 & 10  \\ 
                                    30 & 30  \end{array} \right]$
which has the same pseudo-determinant $40$. If we subtract the second row of $F^T$
from the first row of $F^T$, we get 
$F_3=\left[ \begin{array}{cc} 0 & 1 \\
                              1 & 1 \\
                              2 & 1 \\
                              3 & 1 \end{array} \right]$ and 
$F_3^T G = \left[ \begin{array}{cc} 20 & 20 \\ 10 & 10 \end{array} \right]$ which 
has pseudo-determinant $30$.  \\
Finally, let's append twice the first row of $F^T$ to $F^T$ 
and three times the first row of $G^T$ to $G^T$ to get 
$A=\left[ \begin{array}{ccc} 1 & 1 & 2\\
                             2 & 1 & 4\\
                             3 & 1 & 6\\
                             4 & 1 & 8\end{array} \right]$ and
$B= \left[ \begin{array}{ccc} 1 & 1 & 3\\
                              2 & 2 & 6\\
                              3 & 3 & 9\\
                              4 & 4 & 12 \end{array} \right]$, so that 
$A^T B = \left[ \begin{array}{ccc} 30& 30& 90 \\
                                   10& 10& 30 \\
                                   60& 60& 180 \end{array} \right]$ which has rank $1$ and
$\Det(A^T B) = 220$.  For random $4 \times 2$ matrices $F,G$, scaling a row of $F^T$ by $\lambda$ 
changes the determinant of $F^T G$ by $\lambda$, swapping two rows of $F^T$ swaps
the determinant of $F^T G$ and subtracting a row of $F^T$ from an other row does not 
change the determinant. After appending $\lambda$ times
a rows to $F^T$ and $\mu$ times a row to $G^T$, we would get the pseudo-determinant scaled 
by a factor $1+\lambda \mu = 7$.  

\section{Remarks}

{\bf A)} Pseudo-determinants are useful also in infinite dimensions.
In infinite dimensions, one has regularized Fredholm determinants 
$\det(1+A) = \sum_{k=0}^{\infty} {\rm tr}(\Lambda^k A)$
for trace class operators $A$. If $A$ and $B$ are trace class, then they satisfy
the Cauchy-Binet formula $\det( (1+A)(1+B) ) = \det(1+A) \det(1+B)$ and Fredholm
determinants are continuous on trace class operators. (See e.g. \cite{SimonDeterminants,SimonTrace}).
Pseudo-determinants make an important appearance for zeta regularized 
Ray-Singer determinants \cite{LapidusFrankenhuijsen} of Laplacians for compact manifolds using the
Minakshisundaraman-Pleijel zeta function defined by the Laplacian. These
determinants are always pseudo-determinants because Laplacians on compact Riemannian manifolds 
always have a kernel. For the Dirac operator $A=i \frac{d}{dx}$ on the circle,
which has eigenvalues $n$ to the eigenvalues $e^{-i n}$ the Dirac zeta function
is $\zeta(s) = \sum_{n > 0} n^{-s}$. 
Since $\zeta'(0) = -1$, the circle has the 
Dirac Ray-Singer determinant ${\rm Det}(D)=e$. This must be seen as
a regularized pseudo-determinant because only the nonzero 
eigenvalues have made it into the zeta function.  \\

{\bf B)} Theorem~\ref{Theorem1}) will allow to 
give a combinatorial description of the pseudo-determinant of the Laplace-Beltrami 
operator $L = (d+d^*)^2$ acting on discrete differential forms of a finite simple graph.
The right hand side of Cauchy-Binet for pseudo determinants can be expressed in terms of trees. 
If $D=d+d^*$ is the Dirac matrix of a finite simple graph, 
then $L=D^2$ is the Laplace-Beltrami operator $L$ acting on discrete differential forms. 
It is a $v \times v$ matrix if $v$ is the total number of simplices in the graph.
The formula implies there that
$\Det(L)$ is the number of maximal complete trees in a double cover of $G$ branched at
a single vertex. Like the classical matrix tree theorem which gives an interpretation of the 
$\Det(L_0)$ where $L_0$ is the matrix Laplacian on function, the Dirac matrix theorem 
gives an interpretation of $\Det(L)$, where $L$ is the Laplace-Beltrami operator acting on 
discrete differential forms. Things are a bit more interesting in the Dirac case because trees
come with a positive or negative sign and unlike in the classical matrix tree theorem, 
$\det(D_P^2)$ does no more correspond to a single tree only. 
One has to look at a branched double cover of the graph so that we have to interpret
$\det(D_P) \det(D_P)$ a signed number of maximal labeled trees in a double cover of the 
simplex graph $\G$ branched at the base point. Now, $\Det(D)$ is divisible
by the number of vertices $|V|$ in the original graph $G$, if $G$ is connected. Note that for the 
scalar Laplacian $\L_0$ of the simplex graph $\G$, the classical matrix tree just counts the number of
trees in $\G$ but this is a different matrix than the form Laplacian $(d+d^*)^2$, evenso
they are both square $v \times v$ matrices. As $\G$ is a connected graph with $v$ vertices, the kernel of $\L_0$
is one dimensional and $\Det(\L_0)$ is divisible by $v$. The kernel of $L$ however has a dimension $\sum_i b_i$
where $b_k={\rm dim}(H^k(G))$ of $G$. In this linear algebra context it is worth mentioning that the
Euler-Poincar\'e identity $\sum_k (-1)^k b_k = \sum_k (-1)^k v_k$ for the Euler characteristic of $G$ or
the Hodge formula $b_k = {\rm dim}({\rm ker}(L_k)$ for the dimension of the $k$-harmonic forms are elementary
(see  \cite{knillmckeansinger,KnillILAS}). \\

{\bf C)} For circle graphs $C_n$, the scalar Laplacian $L=L_0$ satisfies 
$$  \Det(L)=\prod_{k=1}^{n-1} 4 \sin( \frac{k \pi}{n})^2 = n^2 \; . $$
We have looked at Birkhoff sums 
$\sum_{k=1}^{n-1} \log(\sin^2(\pi k \alpha/n))$ 
for Diophantine $\alpha$ in \cite{KT,knillcotangent} and
studied the roots of the zeta functions of these graphs which are entire functions
$$  \zeta_n(s) = \sum_{k=1}^{n-1} 2^{-s} \sin^{-s}(\pi \frac{k}{n}) $$ 
in \cite{KnillZeta}. \\

{\bf D)} Let $A$ be the adjacency matrix of a finite simple weighted graph.
The later means that real values $A_{ij} = A_{ji}$ called weights are assigned to the 
edges of the graph. A sub-matrix $P=P_{K,L}$ is obtained by restricting to a sub pattern.
If the square of $\det(P)^2$ is called the benefit of the sub-pattern and $\Det^2(A)$
the benefit of the matrix, then the Pythagorean pseudo-determinant formula~(\ref{Pythagoras}) tells that
the square $\Det^2(A)$ is the sum of the benefits of all $k \times k$ sub patterns, where
$k$ is the rank of $A$. The formula $\Det(1+A^2)$ is the sum of the benefits over all 
possible sub patterns. This picture confirms that both $\Det$ or $\det$ are interesting functionals. \\

{\bf E)} Lets look at a probability space of symmetric $n \times n$ matrices which take values in a finite
set. We can ask which matrices maximize or minimize the pseudo-determinant. Pseudo-determinants can be larger than
expected: for all $2 \times 2$ matrices taking values in $0,1$, the maximal determinant is $1$ while the 
maximal pseudo-determinant is $2$, obtained for the matrix where all entries are 1.
On a probability space $(\Omega,P)$ of matrices, where matrix entries have continuous distribution, 
it does of course not matter whether we take the determinant functional 
or pseudo-determinant functional because non-invertible matrices have zero probability. But we can ask
for which $n \times n$ matrices taking values in a finite set, 
the pseudo-determinant $\Det(A)$ is maximal.  \\

{\bf F)} We can look at the statistics of the pseudo-determinant on the Erd\"os-Renyi probability space of all 
finite simple graphs $G=(V,E)$ of order $|V|=n$ similar than for the Euler characteristic
or the dimension of graph in \cite{randomgraph}. 
Considering the pseudo-determinant functional on the subset of all connected graphs could be interesting. 
While the minimal pseudo-determinant is achieved for the complete graphs where $\Det(D(K_n)) = -n^{2^{n-1}-1}$,
it grows linearly for linear graphs $\Det(D(L_n)) = n (-1)^{n-1}$ and quadratically for cycle graphs
$\Det(D(C_n)) = n^2 (-1)^{n-1}$, where $D=D(C_n)$ is a $2n \times 2n$ matrix and $D^2$ is a block matrix containing
two copies $L_0,L_1$ of the $n \times n$ scalar Laplacian $L_0$ and $\Det(D^2)=\Det(L_0) \Det(L_1) = \Det^2(L_0)=n^4$. 
The complete graph with one added spike seems to lead to the largest pseudo 
determinant. We computed the pseudo-determinant for all graphs up to order $n=7$, where there are
1'866'256 connected graphs. It suggests that a limiting distribution of the random variable 
$X(G) = \log(|\Det(L(G)|)$ might exist on the probability
space of all connected graphs $G$ in the limit $n \to \infty$. \\

{\bf G)} The results were stated over fields of characteristic zero. Since multi-linear
algebra can be done over any field $F$, Theorem~\ref{Theorem2}) generalizes. 
Determinants over a commutative ring $K$ can
be characterized as an alternating $n$-linear function $D$ on $M(n,K)$
satisfying $D(1)=1$ (see e.g. \cite{HoffmanKunze,Dieudonne1}). As discussed, this does not apply
to pseudo-determinants. Is there an elegant axiomatic description of pseudo-determinants? I
asked this Fuzhen Zhang after his plenary talk in Providence who informed me that it is not
a {\bf generalized matrix function} in the sense of Marcus and Minc \cite{MarcusMinc65}, who 
introduced functions of the type 
$d(A) = \sum_{x \in H} \chi(x) \prod A_{i,x(i)}$, where $H$ is a subgroup of the symmetric group
and $\chi$ is a character on $H$. Indeed, the latter is continuous in $A$, 
while the pseudo determinant is not continuous as a function on matrices. 

\section*{Appendix}

We add some Mathematica routines which illustrate the results in this paper. First, we list
an implementation of the pseudo determinant, using the first non zero entry $p_k$ of the 
Characteristic polynomial multiplied by $(-1)^k$

\begin{tiny}
\lstset{language=Mathematica} \lstset{frameround=fttt}
\begin{lstlisting}[frame=single]
FirstNon0[s_]:=-(-1)^ArrayRules[s][[1,1,1]] ArrayRules[s][[1,2]];
PDet[A_]:=FirstNon0[CoefficientList[CharacteristicPolynomial[A,x],x]];
\end{lstlisting}
\end{tiny}

Next we look at the procedure ``PTrace" - the $P$ stands for Pauli trace -
which produces the pairing $\langle F,G \rangle_k$, which is the sum
over all products $\det(F_P) \det(G_P)$ where $P$ runs over $k \times k$
sub matrices. We compute the list of Pauli traces and a generating function $t(x)$ 
which will match the characteristic polynomial $p(x)$ of $F^T G$. 
There is a perfect match as our theorem tells that $p(x)=t(x)$. 

\begin{tiny}
\lstset{language=Mathematica} \lstset{frameround=fttt}
\begin{lstlisting}[frame=single]
PTrace[A_,B_,k_]:=Module[{U,V,m=Length[A],n=Length[A[[1]]],s={},t={}},
 U=Partition[Flatten[Subsets[Range[m],{k,k}]],k]; u=Length[U];
 V=Partition[Flatten[Subsets[Range[n],{k,k}]],k]; v=Length[V];
 Do[s=Append[s,Table[A[[U[[i,u]],V[[j,v]]]],{u,k},{v,k}]],{i,u},{j,v}];
 Do[t=Append[t,Table[B[[U[[i,u]],V[[j,v]]]],{u,k},{v,k}]],{i,u},{j,v}];
 Sum[Det[s[[l]]]*Det[t[[l]]],{l,Length[s]}]];
PTraces[A_,B_]:=(-x)^m+Sum[(-x)^(m-k) PTrace[A,B,k],{k,1,Min[n,m]}];
n=5; m=7; r=4; RandomMatrix[n_,m_]:=Table[RandomInteger[2r]-r,{n},{m}];
F=RandomMatrix[n,m]; G=RandomMatrix[n,m];
CharacteristicPolynomial[Transpose[F].G,x]
PTraces[F,G]
\end{lstlisting}
\end{tiny}

The code for experiments made to discover Theorem~\ref{Theorem2}) generates
matrices $F,G$ and then $A=F^T G$ and $B=F G^T$ for which both have zero 
determinant. This assures that we are in a situation, which can not be explained
by the classical Cauchy-Binet theorem. Then we compute both polynomials: 

\begin{tiny}
\lstset{language=Mathematica} \lstset{frameround=fttt}
\begin{lstlisting}[frame=single]
R:=RandomMatrix[n,m]; 
Shuffle:=Module[{},F=R;G=R;A=Transpose[F].G;B=F.Transpose[G]];
Shuffle; While[Abs[Det[A]]+Abs[Det[B]]>0,Shuffle]; 
{CharacteristicPolynomial[Transpose[F].G,x], PTraces[F,G]}
\end{lstlisting}
\end{tiny}

Here we compare the coefficients stated first in Theorem~\ref{Theorem2}):

\begin{tiny}
\lstset{language=Mathematica} \lstset{frameround=fttt}
\begin{lstlisting}[frame=single]
F=RandomMatrix[n,m]; G=RandomMatrix[n,m];
PTr[A_,B_]:=1+Sum[z^k PTrace[A,B,k],{k,1,Min[n,m]}];
{Det[IdentityMatrix[m]+z Transpose[F].G],PTr[F,G]}
\end{lstlisting}
\end{tiny}

And now we give examples of the identity in Corollary~\ref{determinanttheorem}):

\begin{tiny} 
\lstset{language=Mathematica} \lstset{frameround=fttt}
\begin{lstlisting}[frame=single]
A=RandomMatrix[n,m]; B=RandomMatrix[n,m]; 
{Det[IdentityMatrix[m] + Transpose[A].B], PTraces[A,B] /. x->-1}
\end{lstlisting} 
\end{tiny}

Finally, lets illustrate the multi-linear Lemma~\ref{multiplication}) by building 
two matrices $F,G$ and then comparing the matrix product of 
the exterior powers $\Lambda^k F, \Lambda^k G$ with the exterior power $\Lambda^k(FG)$ 
of the product $FG$: 

\begin{tiny} 
\lstset{language=Mathematica} \lstset{frameround=fttt}
\begin{lstlisting}[frame=single]
WedgeP[F_,k_]:=Module[{U,V,m=Length[F],n=Length[F[[1]]]}, 
 U=Partition[Flatten[Subsets[Range[m],{k,k}]],k];u=Length[U];
 V=Partition[Flatten[Subsets[Range[n],{k,k}]],k];v=Length[V];
 Table[Det[Table[F[[U[[i,u]],V[[j,v]]]],{u, k},{v, k}]],{i,u},{j,v}]];
F=RandomMatrix[n,m]; G=RandomMatrix[n,m]; 
Transpose[WedgeP[F,3]].WedgeP[G,3] - WedgeP[Transpose[F].G,3] 
\end{lstlisting} 
\end{tiny}


\vspace{12pt}
\bibliographystyle{plain}

\end{document}